\newcommand{\dist}{{\operatorname{dist}}}
\newcommand{\diam}{\operatorname{diam}}
\newcommand{\counte}{theorem}
\newtheorem{theorem}{\bf Theorem}[section]
\newtheorem{defn}[\counte]{\bf Definition}
\newtheorem{prop}[\counte]{\bf Proposition}
\newtheorem{lemma}[\counte]{\bf Lemma}
\newtheorem{coro}[\counte]{\bf Corollary}
\newtheorem{example}[\counte]{\bf Example}
\newtheorem{remark}[\counte]{\bf Remark}
\numberwithin{equation}{section}
\renewcommand{\thefootnote}{\fnsymbol{footnote}}
\begin{document}

\renewcommand{\thefootnote}{\arabic{footnote}}

\centerline{\bf\Large Quasi-convex subsets in Alexandrov spaces}
\centerline{\bf\Large with lower curvature bound \footnote{Supported by NSFC 11971057 and BNSF Z190003. \hfill{$\,$}}}

\vskip5mm

\centerline{Xiaole Su, Hongwei Sun, Yusheng Wang\footnote{The
corresponding author (E-mail: wyusheng@bnu.edu.cn). \hfill{$\,$}}}

\vskip6mm

\noindent{\bf Abstract.} In this paper, we introduce quasi-convex subsets
in Alxandrov spaces with lower curvature bound, which include
not only all closed convex subsets without boundary but also all extremal subsets.
Moreover, we explore several
essential properties of such kind of subsets including a generalized Liberman theorem.
It turns out that the quasi-convex subset is a nice and fundamental concept to illustrate the similarities and differences between
Riemannian manifolds and Alxandrov spaces with lower curvature bound.

\vskip1mm

\noindent{\bf Key words.} Quasi-convex subset, Alexandrov space, extremal subset, quasi-geodesic.

\vskip1mm

\noindent{\bf Mathematics Subject Classification (2020)}: 53C20, 51F99.

\vskip6mm

\setcounter{section}{-1}

\section{Introduction}

\vskip2mm

As a generalization of Riemannian manifolds with lower sectional curvature bound, finite dimensional Alexandrov spaces with
lower curvature bound have been studied intensively in past 30 years by many geometrists, especially by
Professor Perel'man (\cite{BGP}, [Pe1-2], [PP1-2]; for the definition refer to \cite{BGP}).
In an Alexandrov space, a geodesic (locally shortest path) might not be prolonged even if
the space is complete and has no boundary. This is quite different from complete Riemannian manifolds.
As a result, the concept of `locally convex subset'
is the best one as a generalization of `totally geodesic submanifold' to show a kind of similarity between Alexandrov spaces and Riemannian manifolds (in a Riemannian manifold, a submanifold is locally convex if and only if it is totally geodesic).

Compared with locally convex subset, extremal subset is a concept to illustrate an
absolute difference between Alexandrov spaces and Riemannian manifolds,
because there is no  extremal subset in a complete Riemannian manifold at all.
The extremal subset was introduced in \cite{PP1} as follows.

\begin{defn}\label{def0.1}{\rm In an $X\in \text{Alex}(k)$, a closed subset $F$ is called to be {\it extremal }
if the following condition is satisfied: if the distance function $\dist_q$ from $q\not\in F$ restricted to $F$
attains a minimum at $p\in F$, then $p$ is a maximal-type critical point of $\dist_q$ on $X$, i.e.
\begin{equation} \label{eqn0.1}
\limsup_{p_i\in X, p_i \to p}\frac{|p_iq|-|pq|}{|p_ip|}\leqslant 0.
\end{equation} }
\end{defn}

In this paper, $\text{Alex}(k)$ denotes the set of all complete and finite dimensional Alexandrov spaces with
curvature $\geqslant k$, and  $X$ always denotes a space in  $\text{Alex}(k)$, and $|pq|$ denotes the distance between two points $p$ and $q$ in $X$.

In Definition \ref{def0.1}, $F$ might be the empty set, but
for some kind of completeness an extra condition is needed if $F$ is the empty set or a single point
and if $X$ is of curvature $\geqslant 1$ (see (0.2.3) below).

For example, if $X$ has non-empty boundary, which might not be locally convex in $X$, then the boundary $\partial X$ is an extremal subset in $X$ (for other examples refer to (\ref{eqn1.1}) and (\ref{eqn2.6}) below). It seems that there is no relation between locally convex subsets and extremal subsets. However, from the following equivalent version of Definition \ref{def0.1}, we can see some relation between them.

Denote by $[pq]$ a minimal geodesic (i.e. a shortest path) between $p$ and $q$, and denote by $\uparrow_p^q$ its direction from $p$ to $q$. For $[pq],[pr]\subset X$, we can define an angle at $p$ between them,
which is equal to the distance $|\uparrow_p^q\uparrow_p^r|$ in $\Sigma_pX$
(where $\Sigma_pX\in\text{Alex}(1)$ is the space of directions of $X$ at $p$, [BGP]).  From the definition of the angle,
it is almost obvious that (\ref{eqn0.1}) is equivalent to that
\begin{equation} \label{eqn0.2}
\forall\ r\in X\setminus\{p\},\ |\uparrow_p^q\uparrow_p^r|\leqslant \frac{\pi}{2} \text{ for any $[pq]$ and any $[pr]$.}
\end{equation}
We would like to point out that (\ref{eqn0.2}) was not formulated out in \cite{PP1} because of its obviousness.

On the other hand, if $F$ is a convex subset in $X$, then from a lemma by Yamaguchi (see (0.2.1) below)
one can conclude that if $\dist_q|_F$ with $q\not\in F$ attains a minimum at $p\in F^\circ$, where $F^\circ$ is
the interior part\footnote{A locally convex subset $F$ in $X\in \text{Alex}(k)$ also belongs to $\text{Alex}(k)$,
so we can talk about whether $F$ has a boundary or not.} of $F$, then
\begin{equation} \label{eqn0.3}
\forall\ r\in F\setminus\{p\},\ |\uparrow_p^q\uparrow_p^r|=\frac{\pi}{2} \text{ for any $[pq]$ and some $[pr]$}.
\end{equation}
(In fact, the $[pr]$ satisfying (\ref{eqn0.3}) can lie in $F$ due to the lemma by Yamaguchi.) More generally,
when $F$ is locally convex in $X$, for any $x\in F$ there is a neighborhood $U_x$ of $x$ such that
if $\dist_q|_F$ with $q\not\in F$ attains a minimum at $p\in F^\circ\cap U_x$, then the corresponding (\ref{eqn0.3})
holds for all $r\in F\cap U_x\setminus\{p\}$. Note that `extremal' has no local version because, to (\ref{eqn0.2}),
`$\forall\ r\in X\setminus\{p\}$' is equivalent to `$\forall\ r\in U\setminus\{p\}$' for any neighborhood $U$ of $p$.

Inspired by (\ref{eqn0.2}) and  (\ref{eqn0.3}), we introduce the following concept in the present paper:

\vskip2mm

\noindent{\bf Definition A.} In an $X\in \text{Alex}(k)$, a closed subset $F$ is called to be {\it quasi-convex}
if the following condition is satisfied: if the distance function $\dist_q|_F$ with $q\not\in F$
attains a minimum at $p\in F$, then
\begin{equation} \label{eqn0.4}
\forall\  r\in F\setminus\{p\},\ 	|\uparrow_p^q\uparrow_p^r|\leqslant\frac{\pi}{2} \text{ for some $[pq]$ and some $[pr]$}.
\end{equation}
More generally, a subset $F$ (might not be closed) is called to be {\it locally quasi-convex} in $X$ if for any $x\in F$ there is a neighborhood $U_x$ of $x$ such that $F\cap U_x$ is closed in $X$ and  if $\dist_q|_F$ with $q\not\in F$
attains a minimum at $p\in F\cap U_x$, then the corresponding (\ref{eqn0.4}) holds for all $r\in F\cap U_x\setminus\{p\}$.

\vskip2mm

In the definition of `quasi-convex',
$F$ has to be closed; otherwise, any open subset will be quasi-convex. And for the sake of completeness, we make a convention that both the empty set and a single point are quasi-convex,
and thus a set of isolated points is locally quasi-convex.

It is obvious that quasi-convex subsets include not only all closed convex subsets without boundary but also all extremal subsets (for other examples please see Section 2 below). And it turns out that a closed and locally quasi-convex
subset in a Riemannian manifold must be a totally geodesic submanifold (see Remark \ref{rem0.4} below).

We would like to point out that, in Definition A, (\ref{eqn0.4}) cannot be replaced by the corresponding (\ref{eqn0.1}) for all $p_i\in F$ with $p_i\to p$ (otherwise, any submanifold in a Riemannian manifold will be quasi-convex).

To $p,q,r\in X$, we associate $\tilde p, \tilde q, \tilde r\in \Bbb S^2_k$
with $|\tilde p\tilde q|=|pq|, |\tilde p\tilde r|=|pr|$ and $|\tilde q\tilde r|=|qr|$ \footnote{For $k>0$,
if $|pq|+|pr|+|qr|=\frac{2\pi}{\sqrt k}$ and $|pq|=\frac{\pi}{\sqrt k}$, it should be added that $\tilde p, \tilde q$ and $\tilde r$ lie in a geodesic of length $\frac{\pi}{\sqrt k}$.},
where $\Bbb S^2_k$ denotes the complete and simply connected space form of dimension 2 and curvature $k$;
and we denote by $\tilde\angle_k qpr$ the angle at $\tilde p$ of the triangle
$\triangle \tilde p\tilde q\tilde r$. By Toponogov's Theorem (see Theorem \ref{thm1.1} below),
it is obvious that condition (\ref{eqn0.4}) implies `$\tilde\angle_k qpr\leqslant\frac\pi2$', but not vice versa.
However, an easy but not obvious observation is that they are equivalent in the situation of Definition A.

\vskip2mm

\noindent {\bf Proposition B.}{\it \label{proB}
In an $X\in \text{\rm Alex}(k)$, a closed subset $F$ is quasi-convex
if and only if the following holds: if the distance function $\dist_q|_F$ with  $q\not\in F$ attains a minimum at $p\in F$, then
$$ \tilde\angle_k qpr\leqslant\frac{\pi}{2} \text{ for all }  r\in F\setminus\{p\}.$$}
\hskip5mm
Our main result aims to show that quasi-convex subsets have the following similar properties as convex and extremal subsets.

\vskip2mm

\noindent {\bf Theorem C.}{\it \label{thm1}
Let $F$ be a locally quasi-convex subset in an $X\in \text{\rm Alex}(k)$. Then the following hold:

\vskip1mm

\noindent{\rm (C1)} For any $p\in F$,  $\Sigma_p F$ is quasi-convex in $\Sigma_p X $, where
$$
\Sigma_pF\triangleq\{\xi\in\Sigma_pX|\ \text{there is $p_i\in F$ and $[pp_i]\ (\subset X)$ such that $p_i\to p$ and $\uparrow_p^{p_i}\to \xi$ as $i\to\infty$}\}.
$$

\noindent{\rm (C2)} A shortest path in $F$ is a quasi-geodesic in $X$.

\vskip1mm

\noindent{\rm (C3)} In addition, we suppose that $F$ is closed in $X$ and $k=1$. If $F$ contains no isolated point, or if $F$ is quasi-convex in $X$ and contains at least two points,
then $|qF|\leqslant \frac\pi2$ for any $q\in X$ (where $|qF|$ is the distance between $q$ and $F$) and the equality
implies that $|qp|=\frac\pi2$ for all $p\in F$.     }

\begin{remark}\label{rem0.2}{\rm (0.2.1) When $F$ is locally convex in $X$, it is well known that $\Sigma_p F$ with $p\in F$ is convex in $\Sigma_p X$
and that a shortest path in $F$ is a geodesic in $X$, i.e. the corresponding (C1) and (C2) hold. And if $F$ has no boundary, then the corresponding (C3) is the Lemma 2.5 in \cite{Ya}.

\vskip1mm

\noindent(0.2.2) When $F$ is extremal in $X$, due to \cite{PP1}, the corresponding (C1)-(C3) hold, and (C2) is called to be a generalized Liberman theorem.
Note that if $p$ is an isolated point of $F$, then $\Sigma_pF$ is empty and (\ref{eqn0.2}) implies that $\diam(\Sigma_p X)$
(i.e. the diameter of $\Sigma_p X$) is less than or equal to $\frac\pi2$.
And if $\Sigma_pF$ consists of a single point $\hat p$, then (\ref{eqn0.2}) implies that $\Sigma_pX$ belongs to the closure of the ball $B_{\hat p}(\frac\pi2)$.

\vskip1mm

\noindent(0.2.3) Due to the fact that (\ref{eqn0.2}) is much stronger than (\ref{eqn0.3}) and (\ref{eqn0.4}),
one can prove that a closed subset $F$ containing at least two points is extremal in $X$
if $\Sigma_p F$ is extremal in $\Sigma_p X$ for any $p\in F$ with
an extra requirement: $\diam(\Sigma_pX)\leqslant\frac{\pi}{2}$ or $\Sigma_pX\subseteq\overline{B_{\hat p}(\frac{\pi}{2})}$ respectively if $\Sigma_pF=\emptyset$ or $\{\hat p\}$ (\cite{PP1}). However,
there is no such strong conclusion on quasi-convex subsets. }
\end{remark}

\begin{remark}\label{rem0.3}{\rm The quasi-geodesic in (C2) was defined by
Petrunin and Perel'man-Petrunin in entirely different ways (ref. \cite{PP1}), and studied intensively in \cite{PP2}.
A quasi-geodesic must be a geodesic in a Riemannian manifold, but might not
in a general space of $\text{\rm Alex}(k)$ (\cite{PP2}).
Because we can substitute quasi-geodesics for geodesics when applying Toponogov's Theorem in some situations (\cite{PP2}, \cite{Pet}), quasi-geodesics play an important role in studying Alexandrov spaces.}
\end{remark}

\begin{remark}\label{rem0.4}{\rm In a complete Riemannian manifold, since a quasi-geodesic is a geodesic, due to (C2) it is
obvious that a closed and locally quasi-convex subset must be a totally geodesic submanifold. On the other hand, locally quasi-convex subsets also include all extremal subsets. Moreover, there are so many examples of locally quasi-convex subsets which are not locally convex nor extremal (see Section 2). Hence, locally quasi-convex subsets illustrate both similarities and differences between Riemannian manifolds and general Alxandrov spaces, and thus are worthy of further study.}
\end{remark}

\begin{remark}\label{rem0.5}{\rm The rough idea of our proofs, especially for (C1) and (C2), is similar to that of the corresponding proofs in \cite{PP1}.
However, it is obvious that (\ref{eqn0.2}) is much stronger than (\ref{eqn0.4}), so our proof will face much more (essential) difficulties. In order to overcome them, not only much more careful calculations should be needed (such as Lemma \ref{lem3.2} and (3.11)), but also
the geometries of (\ref{eqn0.4}) should be explored much more sufficiently (such as Corollary \ref{cor3.5}, Lemmas \ref{lem4.3} and \ref{lem4.4}). }
\end{remark}

We end this section with an important further question on quasi-convex subsets.
Similar to Boundary Conjecture in Alexandrov geometry
and the conjecture for extremal subsets (\cite{PP2}), we can ask whether a primitive component of a quasi-convex subset of codimension 1 or 2 is an Alexandrov space with lower curvature bound (refer to \cite{PP1} for `primitive', and see Remark \ref{rem3.4} below for the dimension of a quasi-convex subset).


\vskip2mm

The rest of the paper is organized as follows. In Section 1, we will give a proof for Proposition B.
In Section 2, we provide some examples of locally quasi-convex subsets. In Section 3, we will prove (C1) of Theorem C.
In Section 4, we will give a proof and some applications of (C3). Based on Sections 3 and 4,
we will give a proof for (C2) in Section 5.


\section{Proof of Proposition B}

Before giving the proof, we first present a brief version of Toponogov's Theorem and
the first variation formula in Alexandrov geometry with lower curvature bound.

\begin{theorem}[\cite{BGP}]\label{thm1.1} In an $X\in \text{\rm Alex}(k)$, for any
minimal geodesics $[pq]$ and $[pr]$, we have that
$$|\uparrow_p^q\uparrow_p^r|\geqslant\tilde\angle_kqpr.$$
\end{theorem}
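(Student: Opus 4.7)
The plan is to derive the hinge inequality $|\uparrow_p^q\uparrow_p^r|\geqslant\tilde\angle_k qpr$ directly from the triangle (four-point) comparison that defines $X\in\text{Alex}(k)$ in \cite{BGP}.

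The crucial intermediate step is the monotonicity of the comparison angle along the sides of the hinge. Concretely, let $q_s$ and $r_t$ denote the points on $[pq]$ and $[pr]$ at arclengths $s\in(0,|pq|]$ and $t\in(0,|pr|]$ from $p$; the claim is that $\tilde\angle_k q_s p r_t$ is non-increasing in both $s$ and $t$. I would prove this first in $s$, holding $t$ fixed: forming the comparison triangle $\triangle\tilde p\tilde q\tilde r_t$ in $\Bbb S^2_k$ for $\triangle pqr_t$ and letting $\tilde q_s\in[\tilde p\tilde q]$ correspond to arclength $s$, the defining inequality of $\text{Alex}(k)$ yields $|q_sr_t|\geqslant|\tilde q_s\tilde r_t|$; then in $\Bbb S^2_k$, with the two sides $s$ and $|pr_t|$ fixed, the hinge angle at $\tilde p$ is a monotone function of the opposite side, so $\tilde\angle_k q_s p r_t\geqslant\tilde\angle_k qpr_t$. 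The symmetric argument in the other variable gives $\tilde\angle_k q_s p r_t\geqslant\tilde\angle_k qpr$ for all admissible $s,t$.

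Next, I would invoke the limit definition of the angle in $\Sigma_p X$,
\[|\uparrow_p^q\uparrow_p^r|=\lim_{s,t\to 0^+}\tilde\angle_0 q_s p r_t,\]
whose existence is a standard consequence of the same monotonicity (specialized with $k=0$ to the infinitesimal regime, where the triangles are flat to leading order). A Taylor expansion of the spherical law of cosines in $\Bbb S^2_k$ against the Euclidean one gives $\tilde\angle_k q_s p r_t-\tilde\angle_0 q_s p r_t=O(s^2+t^2)$ as $s,t\to 0$, so the limit is unchanged upon replacing the subscript $0$ by $k$. Combined with the monotonicity, this yields
\[|\uparrow_p^q\uparrow_p^r|=\lim_{s,t\to 0^+}\tilde\angle_k q_s p r_t\geqslant\tilde\angle_k qpr,\]
which is the desired inequality.

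The main obstacle is the monotonicity step; it encodes the geometric content of the lower curvature bound and is where the four-point condition of $\text{Alex}(k)$ is actually used. Once it is granted, the rest is routine: the definition of the angle between two directions at a point together with an elementary infinitesimal comparison between $\Bbb S^2_k$ and $\Bbb R^2$.
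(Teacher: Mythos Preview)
The paper does not supply a proof of this statement; Theorem~\ref{thm1.1} is quoted from \cite{BGP} as a known foundational result (Toponogov's comparison) and is used without argument throughout. So there is nothing in the paper to compare your proposal against.

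That said, your outline is the standard derivation one finds in \cite{BGP} and in expository treatments: the point-on-side comparison gives monotonicity of $\tilde\angle_k q_s p r_t$ in $s$ and $t$, the angle $|\uparrow_p^q\uparrow_p^r|$ is defined as the limit of comparison angles as $s,t\to 0$, and since $\tilde\angle_k$ and $\tilde\angle_0$ agree to leading order the limit is insensitive to the choice of $k$. Each step is correct. One small remark: the existence of the limit defining the angle is usually argued not just from monotonicity (which gives existence of the limsup and liminf along the diagonal) but from the observation that $\tilde\angle_k q_s p r_t + \tilde\angle_k q_s p r_{t'} \leqslant \pi$ when $t,t'$ are on opposite sides of a point, forcing the oscillation to vanish; you may want to make that explicit if you intend a self-contained write-up. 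Otherwise the proposal is fine.
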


Refer to the content right above Proposition B for the notation `$\tilde\angle_kqpr$'.
And in the rest of the paper, we will denote by $\Uparrow_p^r$ all directions from $p$ to $r$.

\begin{lemma}[\cite{BGP}, \cite{AKP}]\label{lem1.2} In an $X\in \text{\rm Alex}(k)$,
if $[pq]$ and $[pr]$ satisfy $|\uparrow_p^{q}\uparrow_p^{r}|=|\uparrow_p^{q}\Uparrow_p^{r}|$,
then for $q'\in[pq]$ converging to $p$ we have that
$|q'r|=|pr|-|pq'|\cos|\uparrow_p^{q}\uparrow_p^{r}|+o(|pq'|)$,
which implies that $\lim\limits_{q'\to p}\tilde\angle_kq'pr=|\uparrow_p^{q}\uparrow_p^{r}|$.
\end{lemma}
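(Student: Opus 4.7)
I plan to sandwich $|q'r|$ between matching upper and lower bounds of the form $|pr|-|pq'|\cos\theta+o(|pq'|)$ with $\theta=|\uparrow_p^q\uparrow_p^r|$, and then read off the comparison-angle limit from the spherical law of cosines.

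The upper bound is immediate from Theorem \ref{thm1.1} at the vertex $p$ of $\triangle pq'r$: since $[pq']\subset[pq]$ has initial direction $\uparrow_p^q$, one gets $\theta\geqslant\tilde\angle_k q'pr$, and monotonicity of the $\mathbb{S}^2_k$ law of cosines in the angle together with Taylor expansion produces $|q'r|\leqslant|pr|-|pq'|\cos\theta+o(|pq'|)$. No use of the hypothesis enters here.

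For the lower bound I apply Theorem \ref{thm1.1} in hinge form at $q'$. Pick any shortest path $[q'r]$, let $\eta$ be its initial direction at $q'$, and set $\alpha=|\uparrow_{q'}^q\eta|$. Since $q'$ lies in the interior of $[pq]$, the directions $\uparrow_{q'}^p$ and $\uparrow_{q'}^q$ are antipodal in $\Sigma_{q'}X$, so the adjacent-angle identity yields $|\uparrow_{q'}^p\eta|=\pi-\alpha$. Applying Theorem \ref{thm1.1} to the hinge $[q'p],[q'r]$ with closing side $|pr|$, then solving the resulting quadratic in $|q'r|$ and expanding in $|pq'|$, produces $|q'r|\geqslant|pr|-|pq'|\cos\alpha+O(|pq'|^2)$. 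The lower bound therefore reduces to the angle estimate $\liminf_{q'\to p}\alpha\geqslant\theta$ (uniformly in the choice of $\eta$).

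This angle estimate is precisely where the hypothesis enters; I argue by contradiction. If $\alpha_i<\theta-\delta$ along $q'_i\to p$ in $[pq]$ and shortest paths $[q'_ir]$ with initial direction $\eta_i$, then Arzela--Ascoli extracts a subsequential limit $[pr]^*$ of $[q'_ir]$ whose initial direction at $p$ is some $\eta^*\in\Uparrow_p^r$, and the hypothesis forces $|\uparrow_p^q\eta^*|\geqslant|\uparrow_p^q\Uparrow_p^r|=\theta$. For any $u',s>0$ small, let $q''\in[q'_iq]\subset[pq]$ be at arclength $u'$ from $q'_i$ and $r_i(s)\in[q'_ir]$ at arclength $s$ from $q'_i$. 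Since the initial direction at $q'_i$ along $[q'_iq'']$ is $\uparrow_{q'_i}^q$ and along $[q'_ir_i(s)]$ is $\eta_i$, Theorem \ref{thm1.1} at $q'_i$ gives $\alpha_i\geqslant\tilde\angle_k q''q'_ir_i(s)$. Letting $i\to\infty$ with $u',s$ fixed, $r_i(s)\to r^*(s)\in[pr]^*$ and the RHS converges to $\tilde\angle_k q(u')pr^*(s)$ by continuity in the sides; taking the supremum over $u',s>0$ identifies this with $|\uparrow_p^q\eta^*|$ through the monotonicity definition of the angle as a sup of comparison angles at base point $p$. Thus $\liminf_i\alpha_i\geqslant|\uparrow_p^q\eta^*|\geqslant\theta$, contradicting $\alpha_i<\theta-\delta$. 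The closing assertion $\lim_{q'\to p}\tilde\angle_k q'pr=\theta$ then follows by substituting $|q'r|=|pr|-|pq'|\cos\theta+o(|pq'|)$ into the $\mathbb{S}^2_k$ law of cosines at $p$ and taking $|pq'|\to 0$, whereupon $\cos\tilde\angle_k q'pr\to\cos\theta$. The main obstacle is the angle estimate $\liminf\alpha\geqslant\theta$: the adjacent-angle identity and the sup characterization of $|\uparrow_p^q\eta^*|$ via comparison angles with base point approaching $p$ are the two technical facts I expect to deploy most carefully.
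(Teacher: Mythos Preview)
The paper does not supply its own proof of Lemma~\ref{lem1.2}; it simply cites the result from \cite{BGP} and \cite{AKP} as a known first variation formula. So there is no in-paper argument to compare against.

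That said, your proposal is essentially the standard proof one finds in those references, and it is correct. The two-sided sandwich---an upper bound on $|q'r|$ from Toponogov at the vertex $p$, and a lower bound from Toponogov at the interior vertex $q'$ combined with the adjacent-angle identity $|\uparrow_{q'}^p\eta|=\pi-|\uparrow_{q'}^q\eta|$---is exactly the mechanism used in \cite{BGP}. Your handling of the delicate step, namely $\liminf_{q'\to p}\alpha\geqslant\theta$, is also the standard one: pass to a limiting minimal geodesic $[pr]^*$ via Arzel\`a--Ascoli, invoke the hypothesis $|\uparrow_p^q\Uparrow_p^r|=\theta$ to bound $|\uparrow_p^q\eta^*|$ from below, and recover this angle as a supremum of comparison angles via the monotonicity characterization. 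The two technical facts you flag (the adjacent-angle identity at an interior point of a minimal geodesic, and the sup-of-comparison-angles description of the Alexandrov angle) are both standard in \cite{BGP} and require no further justification here.

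One small remark: when you write ``solving the resulting quadratic in $|q'r|$'', it is worth noting explicitly that the relevant root is selected because $|q'r|\to|pr|>0$ as $|pq'|\to0$, which forces the $+$ sign in the quadratic formula; this is implicit in your $O(|pq'|^2)$ expansion but could be stated. Otherwise the argument is complete.
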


\begin{proof}[Proof of Proposition B]\

By Theorem \ref{thm1.1}, it is obvious that the quasi-convexity of $F$ (see (\ref{eqn0.4})) implies $\tilde\angle_k qpr\leqslant\frac{\pi}{2}$. Namely, we need only to prove Proposition B for the sufficiency.
If $F$ is not quasi-convex, then there is $q\not\in F$, $p\in F$ and $r\in F\setminus\{p\}$ such that $\dist_q|_F$
attains a minimum at $p$ and meantime $|\uparrow_p^{q}\uparrow_p^{r}|>\frac\pi2$
for any $[pq]$ and $[pr]$. Without loss of generality, we can select $[pq]$ and $[pr]$ satisfying  $|\uparrow_p^{q}\uparrow_p^{r}|=|\uparrow_p^{q}\Uparrow_p^{r}|$.
Then by Lemma \ref{lem1.2} and because $|\uparrow_p^{q}\uparrow_p^{r}|>\frac\pi2$,
for $q'\in [qp]$ sufficiently close to $p$, we have that
$$\tilde\angle_kq'pr>\frac\pi2.$$
On the other hand, note that $q'\not\in F$ and $\dist_{q'}|_F$ attains the minimum at $p$,
so by our assumption we have that $\tilde\angle_kq'pr\leqslant\frac\pi2$, a contradiction.
\end{proof}

Similarly, from the idea of considering $q'\in [qp]$ close to $p$, we can see the following proposition.

\begin{prop}\label{pro1.3}
 Let $F$ be a quasi-convex subset in an $X\in \text{Alex}(k)$, and let $q\in X$. If $\dist_q|_F$
attains a LOCAL minimum at $p\in F$ with $p\neq q$, then
$$\forall\  r\in F\setminus\{p\},\ 	|\uparrow_p^q\uparrow_p^r|\leqslant\frac{\pi}{2} \text{ for any $[pq]$ and some $[pr]$}.$$
\end{prop}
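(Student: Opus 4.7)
The plan is to reduce to the global-minimum situation of Definition A by sliding $q$ toward $p$ along a chosen minimal geodesic $[pq]$, as suggested in the paper right before the statement. I would fix an arbitrary $[pq]$ and an arbitrary $r\in F\setminus\{p\}$, and consider a sequence $q_i'\in[pq]$ with $|pq_i'|\to 0$. The conclusion will be obtained by applying Proposition B to each $q_i'$ and then passing to the limit through the first variation formula along $[pq]$.

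The first step is to verify that, for all sufficiently small $|pq_i'|$, the function $\dist_{q_i'}|_F$ attains a \emph{global} minimum at $p$ and $q_i'\notin F$. Choose $\delta>0$ so that $|xq|\geqslant|pq|$ for every $x\in F\cap B_p(\delta)$, as permitted by the local-minimum hypothesis. For $x\in F\cap B_p(\delta)$ the triangle inequality yields $|xq_i'|\geqslant|xq|-|qq_i'|\geqslant|pq|-|qq_i'|=|pq_i'|$, while for $x\in F$ with $|xp|\geqslant\delta$ it gives $|xq_i'|\geqslant|xp|-|pq_i'|\geqslant\delta-|pq_i'|$. As soon as $|pq_i'|<\delta/2$ the second estimate exceeds $|pq_i'|$, so $p$ is a global minimum of $\dist_{q_i'}|_F$; and $q_i'\notin F$ because otherwise $|q_i'q|=|pq|-|pq_i'|<|pq|$ would contradict local minimality at $p$ (this also covers the case $q\in F$, since then $|pq|\geqslant\delta$ automatically).

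With the global-minimum property in hand, the quasi-convexity of $F$ together with Proposition B gives $\tilde\angle_k q_i'pr\leqslant\frac{\pi}{2}$ for all $i$ with $|pq_i'|<\delta/2$. The standard first variation formula along the fixed $[pq]$ yields
$$\lim_{i\to\infty}\tilde\angle_k q_i'pr=|\uparrow_p^q\Uparrow_p^r|,$$
which can be verified either by direct linearization of the cosine rule in $\Bbb S^2_k$ via $|q_i'r|=|pr|-|pq_i'|\cos|\uparrow_p^q\Uparrow_p^r|+o(|pq_i'|)$, or by applying Lemma \ref{lem1.2} to a minimizing geodesic realizing the infimum. Hence $|\uparrow_p^q\Uparrow_p^r|\leqslant\frac{\pi}{2}$, and since $\Uparrow_p^r$ is closed in the compact space $\Sigma_pX$ the infimum is attained by the initial direction of some $[pr]$, furnishing the required geodesic. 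As $[pq]$ and $r$ were arbitrary, the proposition follows.

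The main conceptual point, rather than a technical obstacle, is the upgrade from ``\emph{some} $[pq]$'' in Definition A to ``\emph{any} $[pq]$'' in the conclusion; this is exactly what Proposition B contributes, because its comparison-angle formulation is independent of any choice of minimal geodesic, so the same direction $\uparrow_p^q$ we started with survives the limit.
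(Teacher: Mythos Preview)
Your argument is correct and follows the same core idea as the paper: slide $q$ toward $p$ along the chosen $[pq]$ and use that $p$ becomes a \emph{global} minimum of $\dist_{q'}|_F$ for $q'$ close enough. The execution, however, is more elaborate than necessary. The paper observes that for $q'$ in the interior of $[pq]$ the minimal geodesic $[pq']$ is \emph{unique} (it is the subarc of $[pq]$, by non-branching) and $\uparrow_p^{q'}=\uparrow_p^{q}$. Hence the clause ``some $[pq']$'' in Definition~A is automatically ``the $[pq']$ with direction $\uparrow_p^q$'', and applying (\ref{eqn0.4}) directly to a single $q'$ already produces an $[pr]$ with $|\uparrow_p^q\uparrow_p^r|\leqslant\frac\pi2$. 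No sequence, no Proposition~B, no limiting argument, and no compactness of $\Uparrow_p^r$ are needed. Your detour through comparison angles and the first variation formula buys nothing extra here, though it is a perfectly valid alternative and your justification of the global minimum and of $q_i'\notin F$ is carefully done.
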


\begin{proof}
Note that $q'\in [pq]$ sufficiently close to $p$ does not belong to $F$, and $\dist_{q'}|_F$ attains the minimum at $p$.
It is clear that $\uparrow_{p}^q=\uparrow_{p}^{q'}$, and that there is a unique minimal geodesic between $p$ and $q'$.
Hence, by the quasi-convexity of $F$ (see (\ref{eqn0.4})),
there is some $[pr]$ such that $|\uparrow_{p}^q\uparrow_{p}^{r}|=|\uparrow_{p}^{q'}\uparrow_{p}^{r}|\leqslant\frac\pi2$.
\end{proof}


\section{Examples}

This section aims to supply some examples of locally quasi-convex subsets.

\begin{example}\label{ex2.1}{\rm\

\noindent{\bf(2.1.1)} From Remark \ref{rem0.4}, we know that closed and locally quasi-convex subsets coincide with totally geodesic submanifolds in a complete Riemannian manifold.
However, a totally geodesic submanifold might not be quasi-convex (e.g., a cylindrical spiral in a cylinder is not quasi-convex).

\vskip2mm

\noindent {\bf(2.1.2)} In a general $X\in \text{\rm Alex}(k)$, a locally convex subset without boundary must be locally quasi-convex, but a locally convex subset with non-empty boundary might not be locally quasi-convex.
	
\vskip2mm

\noindent {\bf(2.1.3)} In a unit sphere $\Bbb S^n$,  the set of two antipodal points is quasi-convex.
In general, a set of isolated points in $X\in \text{\rm Alex}(k)$ might not be quasi-convex although it is locally quasi-convex.
	
\vskip2mm

\noindent {\bf(2.1.4)} Let $X\in \text{\rm Alex}(k)$ with $\partial X\neq\emptyset$. The doubling of $X$, denoted by $D(X)$, also belongs to $\text{\rm Alex}(k)$ (\cite{Pe2}). Note that
$\partial X$ is extremal in $X$ (\cite{PP1}), so is quasi-convex. However, as a subset of $D(X)$,
$\partial X$ is not extremal, but still quasi-convex.

\vskip2mm

\noindent {\bf(2.1.5)} Let $X_i\in \text{\rm Alex}(k)$ with $\partial X_i\neq\emptyset$, $i=1,2$. If $\partial X_1$ is isometric to $\partial X_2$ with respect to their induced intrinsic metrics from $X_i$, then
the gluing space of $X_1$ and $X_2$ along $\partial X_i$ by the isometry, denoted by $X_1\cup_{\partial X_i}X_2$, also
belongs to $\text{\rm Alex}(k)$ (\cite{Pet}).
In general, $\partial X_i$ as a subset of $X_1\cup_{\partial X_i}X_2$ is not locally quasi-convex unless each $\partial X_i$ is convex in $X_i$.
For example, if $X_1=S^1\times [0,+\infty)$ and $X_2$ is a disc bounded by the circle $S^1$, the $S^1$ is not locally quasi-convex in the `barrel' $X_1\cup_{S^1\times\{0\}}X_2$.

\vskip2mm

\noindent {\bf(2.1.6)} Inspired by the `barrel' in (2.1.5), one can consider a solid cylinder, i.e. $D\times [0,+\infty)$ with $D$ being a disc bounded by a circle $S^1$, which belongs to $\text{\rm Alex}(0)$.
Note that $S^1\times \{0\}$ is extremal (but not locally convex) in $D\times [0,+\infty)$.
If we glue a solid cone whose bottom is isometric to $D$ on to $D\times [0,+\infty)$ along $D\times \{0\}$, then
$S^1\times \{0\}$ is quasi-convex in the gluing space (whose curvature is also $\geqslant 0$), but not locally convex nor extremal.

\vskip2mm

\noindent{\bf(2.1.7)}  Let $\Sigma\in \text{\rm Alex}(1)$. One can define a cone over $\Sigma$, denoted by $C(\Sigma)$, which
belongs to $\text{\rm Alex}(0)$ (refer to \cite{BGP} for details). For example, if $\Sigma$ is  a circle with perimeter $\leqslant 2\pi$, then $C(\Sigma)$ is isometric to a circular cone in $\Bbb R^3$. It is not hard to see that if $F$ is quasi-convex in $\Sigma$ and contains at least two points, then $C(F)$ is quasi-convex in $C(\Sigma)$
(this might not be true for `locally quasi-convex').

\vskip2mm

\noindent{\bf(2.1.8)}  Let $\Sigma_i\in \text{\rm Alex}(1)$, $i=1,2$.
One can define a join of $\Sigma_1$ and $\Sigma_2$, denoted by $\Sigma_1*\Sigma_2$, which also
belongs to $\text{\rm Alex}(1)$ (refer to \cite{BGP} for details).
In particular, if $\Sigma_1$ consists of two points with distance $\pi$, then $\Sigma_1*\Sigma_2$
is called to be a (spherical) suspension over $\Sigma_2$, denoted by $S(\Sigma_2)$.
For example, If $\Sigma_i$ is the $m_i$-dimensional unit sphere, then $\Sigma_1*\Sigma_2$ is the $(m_1+m_2+1)$-dimensional unit sphere; in particular, $S(\Sigma_2)$ is the  $(m_2+1)$-dimensional unit sphere.
Note that each $\Sigma_i$ as a subset in $\Sigma_1*\Sigma_2$ is convex; and as an example of extremal subsets,
\begin{equation} \label{eqn1.1}
\text{$\Sigma_1$ is extremal in $\Sigma_1*\Sigma_2$ if and only if the diameter $\diam(\Sigma_2)\leqslant\frac\pi2$.}
\end{equation}
Moreover, a quasi-convex subset $F_i$ in $\Sigma_i$
is also quasi-convex in $\Sigma_1*\Sigma_2$, but in general $F_1*F_2$ is not quasi-convex in $\Sigma_1*\Sigma_2$ (for example, each $\Sigma_i$ is a circle of
perimeter $<2\pi$, and each $F_i$ consists of two antipodal points in $\Sigma_i$). However,
for the case where $\Sigma_1*\Sigma_2$ is equal to $S(\Sigma_2)$, if $F_1=\Sigma_1$ and $F_2$ is quasi-convex in $\Sigma_2$ and contains at least two points, then $F_1*F_2$ is quasi-convex in $S(\Sigma_2)$.

\vskip2mm

\noindent{\bf(2.1.9)} As a special case of (2.1.8), consider $S(\Sigma_2)$ ($=\Sigma_1*\Sigma_2$) with $\Sigma_2$ being a sphere of diameter $\leqslant\pi$. Put $F_1=\Sigma_1$, and let $F_2$ be two antipodal points in $\Sigma_2$.
Note that $F_1$ is quasi-convex in $S(\Sigma_2)$ (and is extremal if $\diam(\Sigma_2)\leqslant\frac\pi2$); and $F_2$ is quasi-convex in $\Sigma_2$ and in $S(\Sigma_2)$.
And $F_1*F_2$, a union of two antipodal longitudes, is quasi-convex in $S(\Sigma_2)$, but not locally convex if $\diam(\Sigma_2)<\pi$ nor extremal even
if $\diam(\Sigma_2)\leqslant\frac\pi2$.		
}	
\end{example}

In (2.1.9), if $\Sigma_2$ is an arbitrary space in $\text{\rm Alex}(1)$, and if a quasi-convex subset $F$ in $S(\Sigma_2)$
contains at least two points including $z_1$, then we have a necessary condition for $F$ as follows.

\begin{prop} \label{prop2.2}
Let $Z=\{z_1,z_2\}*Y$ with $|z_1z_2|=\pi$ and $Y\in \text{\rm Alex}(1)$,
and let $F$ be a quasi-convex subset in $Z$. If $\{z_1\}\subsetneq F$,
then $F=\{z_1,z_2\}*(F\cap Y)$ with $F\cap Y$ being quasi-convex in $Y$.
\end{prop}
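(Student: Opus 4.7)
The proof splits into three parts: (1) $z_2\in F$; (2) for every $p\in F\setminus\{z_1,z_2\}$ the longitude from $z_1$ through $p$ to $z_2$ lies in $F$, from which $F=\{z_1,z_2\}*(F\cap Y)$ follows; (3) $F\cap Y$ is quasi-convex in $Y$. For (1), if $z_2\notin F$, let $p_0\in F$ realize $|z_2F|$. Since $z_1$ is the unique antipode of $z_2$ in $Z$ and $F$ contains a point other than $z_1$, one has $|p_0z_2|<\pi$, so $p_0\notin\{z_1,z_2\}$ and $0<|z_ip_0|<\pi$ for $i=1,2$. Both minimal geodesics $[p_0z_i]$ are then unique, lying along the longitude through $p_0$ in opposite directions, giving $|\uparrow_{p_0}^{z_1}\uparrow_{p_0}^{z_2}|=\pi$. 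Quasi-convexity of $F$ at $p_0$ with $q=z_2$ and $r=z_1$ would require this angle to be at most $\pi/2$, a contradiction.

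For (2), fix $p\in F\setminus\{z_1,z_2\}$ and let $\gamma:[0,\pi]\to Z$ be the unique minimal geodesic from $z_1$ to $z_2$ through $p$; write $p=\gamma(t_0)$ and $y_0=\gamma(\pi/2)\in Y$. The set $S:=\{t\in[0,\pi]:\gamma(t)\in F\}$ is closed and contains $\{0,t_0,\pi\}$. If $(a,b)\subseteq[0,\pi]\setminus S$ is a maximal open gap, then since $t_0\in S$ at least one of $a>0$ or $b<\pi$ holds. Take $c\in(a,b)$, set $q=\gamma(c)$, and let $p^*\in F$ realize $|qF|$. If $p^*\in\gamma$ then $p^*\in\{\gamma(a),\gamma(b)\}$; picking the endpoint in $(0,\pi)$, the unique geodesics $[p^*q]$ and $[p^*z_i]$ ($i=1$ if $p^*=\gamma(a)$, else $i=2$) run along $\gamma$ in opposite directions, giving angle $\pi$ and violating quasi-convexity at $p^*$ with $r=z_i$. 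If instead $p^*=(s,y^*)$ with $y^*\neq y_0$ in join coordinates, then the uniqueness of $[p^*z_1]$, $[p^*z_2]$ and $|\uparrow_{p^*}^{z_1}\uparrow_{p^*}^{z_2}|=\pi$, combined with Proposition~B for $r=z_1,z_2$, force $\tilde\angle_1 z_ip^*q=\pi/2$ for both $i$ (the two comparison angles sum to $\pi$ and each is at most $\pi/2$). Spherical trigonometry in the 2D lune through the longitudes of $y_0$ and $y^*$---which embeds isometrically in the spherical join $Z$---then yields $\sin|p^*q|=\sin\phi\sin c$ with $\phi=|y^*y_0|_Y>0$. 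Choosing $c\to b^-$ forces $p^*\to\gamma(b)$; then (C1) of Theorem~C applied at $\gamma(b)$ constrains the limit directions $\uparrow_{\gamma(b)}^{p^*}$ to lie in a quasi-convex subset of $\Sigma_{\gamma(b)}Z=\{\uparrow_{\gamma(b)}^{z_1},\uparrow_{\gamma(b)}^{z_2}\}*\Sigma_{y_0}Y$, and a parallel angular argument there rules out off-longitude accumulation, reducing to the on-longitude case. Hence $S=[0,\pi]$, so $\gamma\subseteq F$ and $y_0\in F\cap Y$; this gives $F=\{z_1,z_2\}*(F\cap Y)$.

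For (3), given $F=\{z_1,z_2\}*(F\cap Y)$, the join formula $\cos|yw|=\sin t\cdot\cos|yy'|_Y$ for $y\in Y$ and $w=(t,y')\in F$ yields $|yw|\geqslant|yy'|_Y$, with strict inequality unless $w=y'\in Y$. Thus every minimizer of $\dist_y|_F$ for $y\in Y\setminus F$ lies in $F\cap Y$; since minimal geodesics between points of $Y$ agree in $Z$ and in $Y$, quasi-convexity of $F$ at such a minimizer, tested against $r\in(F\cap Y)\setminus\{y^*\}$, transfers directly to the quasi-convexity of $F\cap Y$ in $Y$. The main obstacle is the off-longitude subcase of (2): combining the lune identity $\sin|p^*q|=\sin\phi\sin c$ with (C1) at $\gamma(b)$ to exclude off-longitude accumulation of $p^*$ as $c\to b^-$ is the essential difficulty, and it is here that the spherical-join geometry of $Z$ must be exploited in full.
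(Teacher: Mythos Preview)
Your steps (1) and (3) are essentially the same as the paper's; the issue is entirely in step (2), in the off-longitude subcase that you yourself flag as ``the essential difficulty.'' The appeal to (C1) and a ``parallel angular argument'' at $\gamma(b)$ is not a proof: even granting that $\Sigma_{\gamma(b)}F$ is quasi-convex in the suspension $\Sigma_{\gamma(b)}Z$, you do not explain what contradiction follows from the accumulation of the directions $\uparrow_{\gamma(b)}^{p^*}$ in $\Sigma_{\gamma(b)}F$. If you intend an induction on dimension, it is neither set up nor carried out, and it is not clear what statement about $\Sigma_{\gamma(b)}F$ would actually rule out off-longitude minimizers for \emph{every} $c$ near $b$. (Incidentally, your inequality $|yw|\geqslant|yy'|_Y$ in (3) fails when $|yy'|_Y>\tfrac\pi2$; it can be rescued by first using quasi-convexity at the pole $z_1$ to show $|y\,y'|_Y\leqslant\tfrac\pi2$ for all $y'\in F\cap Y$, but this should be said.)

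The paper avoids this difficulty by a different mechanism. It also takes points $q_i$ in the gap converging to the gap endpoint $q\in F$ and lets $p_i\in F$ minimize $\dist_{q_i}|_F$; after disposing of the on-longitude case exactly as you do, it works in the lune $\mathcal{L}_i=\{z_1,z_2\}*[y\,y_i]$ and uses Proposition~B with $r=z_1,z_2$ to get $|\uparrow_{p_i}^{q_i}\uparrow_{p_i}^{z_j}|=\tfrac\pi2$. From this it extracts that the limit direction $\xi=\lim\uparrow_q^{p_i}$ satisfies $|\uparrow_q^{z_1}\xi|<\tfrac\pi2$, and then --- this is the point you are missing --- it \emph{compares two foot points}: for a fixed large $i_0$ and $i\gg i_0$ one obtains $\tilde\angle_1\,q_ip_ip_{i_0}>\tfrac\pi2$. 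Since $p_{i_0}\in F$ and $p_i$ minimizes $\dist_{q_i}|_F$, this contradicts Proposition~B directly. No use of (C1) or of the space of directions at the gap endpoint is needed; the contradiction is obtained entirely in $Z$ via a two-index comparison of nearest points.
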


Note that in Proposition \ref{prop2.2}, `$F=\{z_1,z_2\}*(F\cap Y)$' can be written as `$F=\{z_1,z_2\}*\Sigma_{z_1}F$'.

\begin{proof} First of all, we show that $z_2 \in F$.
If it is not true, then there is $p\in F\setminus\{z_1, z_2\}$ such that $|z_2p|=|z_2F|$ because $\{z_1\}\subsetneq F$. However, by the suspension structure of $Z$,
$$
\text{for any $x\in Z\setminus\{z_1, z_2\}$, there is a unique $[z_1z_2]$, denoted by $[z_1z_2]_x$, such that $x\in [z_1z_2]_x$.}
$$
Note that $[z_ip]\subsetneq [z_1z_2]_p$  ($i=1, 2$) is the unique minimal geodesic between $p$ and $z_i$ with $|\uparrow_p^{z_1}\uparrow_p^{z_2}|=\pi$,
which contradicts the quasi-convexity of $F$ (i.e. (\ref{eqn0.4})).

Next, if $F\neq\{z_1,z_2\}$, it suffices to show that $[z_1z_2]_q\subset F$ for any $q\in F\setminus\{z_1,z_2\}$,
which implies that $F=\{z_1,z_2\}*(F\cap Y)$
and $F\cap Y$ is quasi-convex in $Y$ (cf. (2.1.8)).

If $[z_1z_2]_q\not\subset F$, without loss of generality, we can assume that $[qz_1]\not\subset F$,
and that there is $[qq_0]\subsetneq [qz_1]$ such that $[qq_0]\cap F=\{q\}$.
Let $q_i\in [qq_0]\setminus\{q\}$ with $q_i\to q$ as $i\to\infty$,
and let $p_i\in F$ such that $|q_ip_i|=|q_iF|$.
We will show that there is large $i_0$ such that, for $i\gg i_0$,
\begin{equation}\label{eqn2.0}
\tilde\angle_1q_ip_ip_{i_0}>\frac\pi2,
\end{equation}
which contradicts the quasi-convexity of $F$ by Proposition B (and the proof is finished).

Observe that $p_i\notin [z_1z_2]_q$ for large $i$ (otherwise, for $j=1$ or 2,
`$|\uparrow_{p_i}^{q_i}\uparrow_{p_i}^{z_j}|=\pi$' will contradict the quasi-convexity of $F$).
Then we can consider a `lune' ${\cal L}_i$ bounded by $[z_1z_2]_q$ and $[z_1z_2]_{p_i}$.
Note that there is $[yy_i]\subset Y$ with $y\in[z_1z_2]_q$ and $y_i\in [z_1z_2]_{p_i}$
such that ${\cal L}_i=\{z_1,z_2\}*[yy_i]$. Passing to a subsequence,
we can assume that $\uparrow_{y}^{y_i}$ converges as $i\to\infty$;
and thus for the $[qp_i]$ in ${\cal L}_i$, $\uparrow_{q}^{p_i}$ converges to some $\xi\in\Sigma_qZ$ as $i\to\infty$.
Moreover, we have that
\begin{equation}\label{eqn2.1}
|\uparrow_{q}^{z_1}\xi|<\frac\pi2.
\end{equation}
In fact, by the quasi-convexity of $F$, one can conclude that $|\uparrow_{p_i}^{q_i}\uparrow_{p_i}^{z_j}|=\frac{\pi}{2}$ ($j=1,2$) for the $[q_ip_i]$ in ${\cal L}_i$. And thus it follows that
\begin{equation}\label{eqn2.3}
|\uparrow_{q_i}^{p_i}\uparrow_{q_i}^{z_j}|\to\frac\pi2 \text{ as $i\to\infty$}.
\end{equation}
This together with `$|q_ip_i|<|q_iq|\to 0$' and `$|\uparrow_{q}^{z_1}\uparrow_{q}^{p_i}|\to|\uparrow_{q}^{z_1}\xi|$ as $i\to\infty$' implies (\ref{eqn2.1}).

Furthermore, based on (\ref{eqn2.1}) we claim that, for sufficiently small $\epsilon>0$, there is large $i_0$ such that
\begin{equation}\label{eqn2.5}
\left||\uparrow_{q_i}^{p_i}\uparrow_{q_i}^{p_{i_0}}|-\left(\frac\pi2-|\uparrow_{q}^{z_1}\xi|\right)\right|<3\epsilon \text{ for $i\gg i_0$}.
\end{equation}
By Theorem \ref{thm1.1}, (\ref{eqn2.5}) together with $ |q_ip_i|\ll |p_ip_{i_0}|$ for $i\gg i_0$ (note that $|q_ip_i|<|q_iq|\to 0$) implies that $\tilde\angle_1p_iq_ip_{i_0}+\tilde\angle_1q_ip_{i_0}p_i<\frac\pi2$.
It follows that $\tilde\angle_1q_ip_ip_{i_0}>\frac\pi2$, i.e. (\ref{eqn2.0}) holds.
Hence, we need only to verify the claim right above.
Note that, for sufficiently small $\epsilon$, there is large $i_0$ such that
$$|\uparrow_{q}^{p_i}\uparrow_{q}^{p_{i_0}}|<\epsilon \text{ and } |qp_i|\ll |qp_{i_0}| \text{ for } i\gg i_0.$$
This together with $q_i\to q$ implies that  $\left||\uparrow_{q_i}^{p_{i_0}}\uparrow_{q_i}^{z_1}|-|\uparrow_{q}^{z_1}\xi|\right|<2\epsilon$ for $i\gg i_0$.
And note that ${\cal L}_i$ satisfies that $\uparrow_{y}^{y_i}$ converges as $i\to\infty$, by which and (\ref{eqn2.3}) we can assume that
$\left||\uparrow_{q_i}^{p_i}\uparrow_{q_i}^{p_{i_0}}|+|\uparrow_{q_i}^{p_{i_0}}\uparrow_{q_i}^{z_1}|-\frac\pi2\right|<\epsilon$ for $i\gg i_0$. Then (\ref{eqn2.5}) follows, i.e. the claim is verified.
\end{proof}

For another important kind of examples of quasi-convex subsets, we have the following proposition.

\begin{prop}\label{prop2.3}
Let $X\in \text{\rm Alex}(k)$, and let $\Gamma$ be a compact group which acts on $X$ by isometries
with non-empty fixed point set $F$. Then $F$ is quasi-convex in $X$.
\end{prop}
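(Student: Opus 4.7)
The plan is to invoke Proposition~B and reduce the claim to showing $\tilde\angle_k qpr\leqslant\pi/2$ for every $r\in F\setminus\{p\}$. I then argue by contradiction: suppose $\tilde\angle_k qpr>\pi/2$ for some $r\in F\setminus\{p\}$.

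The first step exploits the $\Gamma$-symmetry. Since $p,r\in F$ are $\Gamma$-fixed and each $\gamma\in\Gamma$ is an isometry, for every $\gamma$ the image $\gamma[pr]$ is another minimal geodesic from $p$ to $r$ (so $\gamma\uparrow_p^r\in\Uparrow_p^r$), and $|p,\gamma q|=|pq|$, $|r,\gamma q|=|rq|$. Consequently the comparison triangle $\triangle\tilde p\,\widetilde{\gamma q}\,\tilde r$ is congruent to $\triangle\tilde p\tilde q\tilde r$, so $\tilde\angle_k(\gamma q)pr=\tilde\angle_k qpr$, and Theorem~\ref{thm1.1} gives
$$
|\uparrow_p^q,\gamma\uparrow_p^r|\geqslant\tilde\angle_k qpr>\pi/2 \quad\text{for every }\gamma\in\Gamma.
$$
Hence the whole orbit $\Gamma\cdot\uparrow_p^r\subset\Sigma_pX$ lies in the open region $\{\eta\in\Sigma_pX:|\eta,\uparrow_p^q|>\pi/2\}$.

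The plan is then to produce a point $c\in F$ with $|cq|<|pq|$, contradicting minimality. The natural candidate is a $\Gamma$-invariant center of a small orbit near $p$: for small $t>0$, take $p_t\in[pr]$ with $|pp_t|=t$ and form the orbit $\Gamma p_t\subset S_p(t)$; for $t$ small enough, it lies in a convex geodesic neighbourhood of $p$ and admits a unique $\Gamma$-equivariant Chebyshev (or Karcher) center $c_t$. By uniqueness and $\Gamma$-invariance of the orbit, $c_t$ is itself fixed by $\Gamma$, hence $c_t\in F$. Using the first-variation formula (Lemma~\ref{lem1.2}) together with the obtuseness hypothesis and the $\Gamma$-averaging of the directions, the goal is then to deduce $|c_tq|<|pq|$ for all sufficiently small $t$.

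The main obstacle will be this last step. Each individual orbit point $\gamma p_t$ is, by first variation applied to the obtuse angle $|\uparrow_p^q,\gamma\uparrow_p^r|>\pi/2$, strictly \emph{farther} from $q$ than $p$; moreover the limiting direction $\uparrow_p^{c_t}$ (as $t\to 0$) lies in $\Sigma_pF$ and hence makes angle $\geqslant\pi/2$ with $\uparrow_p^q$ by the minimality of $p$ on $F$. Thus the first order alone is insufficient, and the required strict decrease has to be extracted from a Jensen-type concavity inequality for the cosine function in the positively curved tangent space of directions $\Sigma_pX\in\text{\rm Alex}(1)$, carefully exploiting the strictness of $\tilde\angle_k qpr>\pi/2$. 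Carrying out this averaging analysis in the tangent cone, and converting it into a quantitative second-order estimate against the baseline $|pq|$, is the technical core of the argument.
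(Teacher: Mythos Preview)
Your approach differs substantially from the paper's, and it has a genuine gap precisely where you flag the ``main obstacle.''

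The paper's proof is a few lines: pass to the quotient $\tau:X\to X/\Gamma$, which also lies in $\text{Alex}(k)$. Because $F$ is the fixed-point set, any minimal geodesics $[\tau(p)\tau(q)]$ and $[\tau(p)\tau(r)]$ in $X/\Gamma$ lift to minimal geodesics $[pq]$, $[pr]$ in $X$ with the \emph{same} angle at $p$, and $\dist_{\tau(q)}|_{\tau(F)}$ still attains its minimum at $\tau(p)$. The key input, quoted from \cite{PP1}, is that $\tau(F)$ is \emph{extremal} in $X/\Gamma$; hence $|\uparrow_{\tau(p)}^{\tau(q)}\uparrow_{\tau(p)}^{\tau(r)}|\leqslant\pi/2$, and this transfers back to $|\uparrow_p^q\uparrow_p^r|\leqslant\pi/2$ for those lifts. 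Definition~A is verified directly; Proposition~B is not even invoked.

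Your barycentre route, by contrast, runs into two concrete problems. First, in spaces with only a \emph{lower} curvature bound there is in general no locally convex neighbourhood of $p$, so the existence and uniqueness of a Chebyshev or Karcher centre of $\Gamma p_t$ is not available; those constructions are native to $\operatorname{CAT}(\kappa)$ geometry, not to $\text{Alex}(k)$. Second, even if one grants a $\Gamma$-fixed centre $c_t\in F$, your own first-order analysis shows that the limiting direction $\uparrow_p^{c_t}$ lies in $\Sigma_pF$, and minimality of $p$ forces $|\uparrow_p^q\,\Sigma_pF|\geqslant\pi/2$; so to first order $|c_tq|\geqslant|pq|$. The ``Jensen-type concavity'' you invoke to manufacture a strict decrease is never formulated, and in fact the natural function $\eta\mapsto\cos|\uparrow_p^q\,\eta|$ on $\Sigma_pX\in\text{Alex}(1)$ does not satisfy a concavity inequality of the right sign: averaging values that are all negative cannot by concavity alone produce a positive value at the centre. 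As written, the argument does not close, whereas the quotient-space route sidesteps all of this entirely.
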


\begin{proof} Note that $F$ is a closed subset in $X$. Let $q$ be an arbitrary point in $X\setminus F$, and assume that $\dist_q|_F$ attains a minimum at $p\in F$.
We need to verify that for any $r\in F\setminus\{p\}$ there is some $[pq]$ and $[pr]$ such that $|\uparrow_p^q\uparrow_p^r|\leqslant\frac\pi2$. Consider the quotient map $\tau: X\to X/\Gamma$. Note that the orbit space $X/\Gamma$ endowed with the induced metric from $X$ also belongs to $\text{\rm Alex}(k)$ ([BGP]). Since $F$ is the fixed point set of the isometric $\Gamma$-action,
for any $[\tau(p)\tau(q)]$ and $[\tau(p)\tau(r)]$ there is $[pq]$ and $[pr]$  such that $\tau([pq])=[\tau(p)\tau(q)]$, $\tau([pr])=[\tau(p)\tau(r)]$  and
$$|\uparrow_p^q\uparrow_p^r|=|\uparrow_{\tau(p)}^{\tau(q)}\uparrow_{\tau(p)}^{\tau(r)}|;$$
moreover, $\dist_{\tau(q)}|_{\tau(F)}$ attains a minimum at $\tau(p)\in \tau(F)$ on $X/\Gamma$.
Meantime, it is known that
\begin{equation}\label{eqn2.6}
\text{$\tau(F)$ is extremal in $X/\Gamma$ (\cite{PP1})}.
\end{equation}
So, $|\uparrow_{\tau(p)}^{\tau(q)}\uparrow_{\tau(p)}^{\tau(r)}|\leqslant\frac\pi2$ by (\ref{eqn0.2}), and thus
$|\uparrow_p^q\uparrow_p^r|\leqslant\frac\pi2$.
\end{proof}

\begin{remark}\label{rem2.4}{\rm We would like to point out that although \cite{PP1} shows that the $F$
in Proposition \ref{prop2.3} is extremal as a subset of $X/\Gamma$, it does not describe what kind of subset
$F$ is in $X$.}
\end{remark}


\section{Proof for (C1) of Theorem C}

In this section, the main goal is to prove (C1), i.e.,
$\Sigma_p F$ is quasi-convex in $\Sigma_p X $ if $F$ is locally quasi-convex
in $X$.  As mentioned in Remark \ref{rem0.5}, the proof will be more difficult than that for $\Sigma_p F$ being extremal in $\Sigma_p X$ if $F$ is extremal in $X$ (see Remark \ref{rem3.0} below).

\begin{lemma}[\cite{BGP}]\label{lem3.1}
Let $X\in \text{\rm Alex}(k)$, and $p\in X$. Then for any small $\epsilon>0$
there is a neighborhood $U_\epsilon$ of $p$ such that, for any $[pq],[pr]\subset U_\epsilon$ with $q,r\neq p$,
$$0\leqslant|\uparrow_p^q\uparrow_p^r|-\tilde\angle_k qpr<\epsilon.$$
\end{lemma}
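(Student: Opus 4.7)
The inequality $\tilde\angle_k qpr\leqslant|\uparrow_p^q\uparrow_p^r|$ is immediate from Theorem~\ref{thm1.1}, so the substantive content of the lemma is the \emph{uniform} smallness of the gap on the right. My plan is to argue by contradiction, using the compactness of $\Sigma_pX$ (which holds because $X$ is finite-dimensional) together with the monotonicity of comparison angles that underpins Theorem~\ref{thm1.1}: for $q'\in[pq]$, $r'\in[pr]$ with $|pq'|\leqslant|pq|$ and $|pr'|\leqslant|pr|$ one has $\tilde\angle_k q'pr'\geqslant\tilde\angle_k qpr$, and consequently $|\uparrow_p^q\uparrow_p^r|=\lim_{q',r'\to p}\tilde\angle_k q'pr'$.

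Suppose the conclusion fails. Then there exist $\epsilon_0>0$ and minimal geodesics $[pq_n],[pr_n]$ with $q_n,r_n\to p$ (and $q_n,r_n\neq p$) such that $|\uparrow_p^{q_n}\uparrow_p^{r_n}|-\tilde\angle_k q_npr_n\geqslant\epsilon_0$ for every $n$. Passing to a subsequence, $\uparrow_p^{q_n}\to\xi$ and $\uparrow_p^{r_n}\to\eta$ in $\Sigma_pX$; continuity of the distance on $\Sigma_pX$ gives $|\uparrow_p^{q_n}\uparrow_p^{r_n}|\to|\xi\eta|$. It therefore suffices to establish the convergence $\tilde\angle_k q_npr_n\to|\xi\eta|$, which contradicts the standing gap assumption.

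For this convergence I would work directly with the spherical law of cosines on $\Bbb S^2_k$: writing $a_n=|pq_n|$, $b_n=|pr_n|$, $c_n=|q_nr_n|$, the angle $\tilde\angle_k q_npr_n$ depends only on the triple $(a_n,b_n,c_n)$, and for small $a_n,b_n$ differs from the Euclidean counterpart $\tilde\angle_0 q_npr_n$ by $O(a_n^2+b_n^2)$. It thus reduces to the asymptotic $c_n^2=a_n^2+b_n^2-2a_nb_n\cos|\xi\eta|+o(a_nb_n)$. The upper bound on $c_n$ comes from fixing geodesic representatives $q(t),r(s)$ with $\uparrow_p^{q(t)}=\xi_0$, $\uparrow_p^{r(s)}=\eta_0$ close to $\xi,\eta$, applying the limit-definition of angle to such reference triangles, and transferring via monotonicity; the lower bound on $c_n$ follows from Theorem~\ref{thm1.1} applied to $[pq_n]$ and $[pr_n]$ together with $|\uparrow_p^{q_n}\uparrow_p^{r_n}|\to|\xi\eta|$.

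The main obstacle is the first-variation-type control of $|q_nr_n|$ when neither the geodesics nor the endpoints are fixed, and in particular when $\xi$ and $\eta$ are limits in $\Sigma_pX$ that need not themselves be geodesic directions. A cleaner alternative route is to rescale and pass to the tangent cone $C(\Sigma_pX)$ by pointed Gromov--Hausdorff convergence; there the distance between the limit points is given exactly by the Euclidean formula with angle $|\xi\eta|$, and the comparison-angle convergence then follows by continuity. I would present the direct law-of-cosines estimate first to avoid any apparent circularity with the construction of the tangent cone and the angle, and only mention the cone picture as a conceptual check.
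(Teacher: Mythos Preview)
The paper does not supply its own proof of Lemma~\ref{lem3.1}; the result is simply quoted from \cite{BGP}. So there is nothing in the paper to compare against, and what follows concerns only your argument.

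Your contradiction/compactness framework is sound, and Theorem~\ref{thm1.1} immediately yields $\limsup_n\tilde\angle_k q_npr_n\leqslant|\xi\eta|$. The substantive task is the opposite inequality $\liminf_n\tilde\angle_k q_npr_n\geqslant|\xi\eta|$, which amounts to a \emph{lower} bound on $c_n=|q_nr_n|$. Here you have the labels reversed: the hinge form of Theorem~\ref{thm1.1} gives $c_n\leqslant(\text{model side with angle }|\uparrow_p^{q_n}\uparrow_p^{r_n}|)$, an \emph{upper} bound on $c_n$; and monotonicity of comparison angles applies only along a \emph{fixed} pair of geodesics, so it does not transfer estimates from your reference hinge $[pq(\cdot)],[pr(\cdot)]$ to the moving hinges $[pq_n],[pr_n]$.

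The more serious gap --- which you yourself flag but do not close --- is the case of disparate scales $a_n/b_n\to 0$ (or $\infty$); you cannot exclude this after passing to a subsequence, and monotonicity goes the wrong way to reduce it to the comparable-scale case. The quantity $\cos\tilde\angle_0 q_npr_n=(a_n^2+b_n^2-c_n^2)/(2a_nb_n)$ is sensitive to perturbations of $c_n$ of order $\min(a_n,b_n)$, whereas both the pointed GH convergence to $C_pX$ and any reference-geodesic/triangle-inequality comparison control $c_n$ only to accuracy $o(\max(a_n,b_n))$. Concretely, if $a_n/b_n\to 0$ then knowing $c_n/b_n\to 1$ tells you nothing about $(b_n-c_n)/a_n$, and it is exactly this ratio that governs the limit of the comparison angle. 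Closing this requires a first-variation estimate in which the ``far'' point is itself allowed to approach $p$ (a uniform version of what underlies Lemma~\ref{lem3.6}); neither of your two proposed routes supplies it as written.
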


\begin{lemma}\label{lem3.2}
Let $p_i, q_i, r_i$ be points in a metric space with $|p_ir_i|\to 0$ as $i\to\infty$,
and let $k$ be a real number. If $\lim\limits_{i\to\infty}\tilde\angle_{k} q_ip_ir_i$ exists with $\limsup\limits_{i\to\infty}|q_ip_i|<\frac{\pi}{\sqrt{k}}$ for $k>0$,
then, for any $k'\neq k$ with $\limsup\limits_{i\to\infty}|q_ip_i|<\frac{\pi}{\sqrt{k'}}$ for $k'>0$, we have that $\lim\limits_{i\to\infty}\tilde\angle_{k'} q_ip_ir_i=\lim\limits_{i\to\infty}\tilde\angle_{k} q_ip_ir_i$.
\end{lemma}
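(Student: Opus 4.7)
Set $a_i=|p_iq_i|$, $b_i=|p_ir_i|$, $c_i=|q_ir_i|$, $\eta_i=c_i-a_i$, and $\theta_i^k=\tilde\angle_k q_ip_ir_i$; the triangle inequality gives $|\eta_i|\leqslant b_i\to 0$. My plan is to write $\cos\theta_i^k$ explicitly via the law of cosines in $\Bbb S^2_k$ and to show that the $k$-dependent contribution is a Taylor error which vanishes as $b_i\to 0$, so that $\lim\cos\theta_i^k=\lim\cos\theta_i^{k'}=-\lim\eta_i/b_i$.

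For the main case, after passing to a subsequence, assume $a_i$ lies in a compact sub-interval of $(0,\pi/\sqrt{k}\,)$, and likewise for $k'$. For $k>0$ the spherical law of cosines reads
$$\cos\theta_i^k=\frac{\cos(\sqrt{k}\,c_i)-\cos(\sqrt{k}\,a_i)\cos(\sqrt{k}\,b_i)}{\sin(\sqrt{k}\,a_i)\sin(\sqrt{k}\,b_i)}.$$
Substituting $\cos(\sqrt{k}\,c_i)=\cos(\sqrt{k}\,a_i)\cos(\sqrt{k}\,\eta_i)-\sin(\sqrt{k}\,a_i)\sin(\sqrt{k}\,\eta_i)$ and Taylor-expanding in $b_i$ (using $|\eta_i|\leqslant b_i$), I expect to derive
$$\cos\theta_i^k=-\frac{\eta_i}{b_i}+\frac{\sqrt{k}\,\cot(\sqrt{k}\,a_i)\,(b_i^2-\eta_i^2)}{2\,b_i}+O(b_i),$$
with the $O(b_i)$ uniform whenever $\sin(\sqrt{k}\,a_i)$ stays away from $0$. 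Since $0\leqslant b_i^2-\eta_i^2\leqslant 2b_i^2$, the middle term is itself $O(b_i)$, so $\cos\theta_i^k=-\eta_i/b_i+o(1)$; the identical expansion for $k'$ yields the matching leading term, and the two angle limits coincide. The case $k\leqslant 0$ goes through verbatim with $\cos,\sin$ replaced by $\cosh,\sinh$ (note $\coth(\sqrt{|k|}\,a_i)$ is bounded on any compact subset of $(0,\infty)$); the flat case $k=0$ is the limit.

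It remains to handle the degeneration $a_i\to 0$, where $\cot(\sqrt{k}\,a_i)$ blows up and the error bound above fails. I plan to reduce this to the previous case via the scaling identity $\tilde\angle_k(a,b,c)=\tilde\angle_{\lambda^2 k}(a/\lambda,b/\lambda,c/\lambda)$: passing to a subsequence, take $\lambda_i=a_i$ if $b_i/a_i$ stays bounded and $\lambda_i=b_i$ otherwise. One of the rescaled sides is then identically $1$, the rescaled triangle converges to a (possibly degenerate) limit triangle, and the rescaled curvatures $\lambda_i^2 k$ and $\lambda_i^2 k'$ both tend to $0$. Joint continuity of the comparison angle in the curvature and the sides, or equivalently a direct application of the Taylor expansion above to the rescaled triangles (now legitimate because the side playing the role of ``$a$'' is bounded below by $1$), forces $\lim\theta_i^k$ and $\lim\theta_i^{k'}$ both to coincide with the Euclidean comparison angle of the limiting rescaled triangle.

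The main obstacle is the sub-case $b_i/a_i\to\infty$ inside the degeneration, where the rescaled triangle collapses to $(0,1,1)$ and the Euclidean comparison angle is of the indeterminate form $0/0$; it has to be extracted from the higher-order ratio $(1-c_i/b_i)/(a_i/b_i)$, using the triangle inequalities $|c_i-a_i|\leqslant b_i$ and $c_i\geqslant b_i-a_i$ to see that this ratio lies in $[-1,1]$ and determines the limit. The scaling identity together with the direct Taylor computation in the non-degenerate regime nonetheless unifies every sub-case into the same $k$-independent answer.
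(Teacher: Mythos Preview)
The paper omits the proof of this lemma entirely, remarking only that it ``should be a known fact to experts because the calculation is only on $\Bbb S_k^2$ and $\Bbb S_{k'}^2$.'' Your law-of-cosines expansion supplies exactly that calculation and is correct in the main regime where $a_i$ stays in a compact sub-interval of $(0,\pi/\sqrt{k})$: your displayed formula (with error in fact $O(b_i^2)$, even better than your stated $O(b_i)$) gives the $k$-independent leading term $-\eta_i/b_i$, and hence the equality of limits. This non-degenerate regime is precisely the one needed in Corollary~\ref{cor3.5}, while the case where both adjacent sides tend to $0$ is, as the paper itself notes after the lemma, the standard fact used in \cite{BGP} to define angles.

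One point of phrasing to tighten in your degeneration analysis: after rescaling by $\lambda_i=a_i$ with $b_i/a_i$ bounded but \emph{not} tending to $0$, the rescaled $b$ does not tend to $0$, so your Taylor expansion does not literally re-apply; there you must use the joint-continuity branch (which is fine, the limit triangle $(1,B,C)$ with $B>0$ has a well-defined angle). Conversely, in the sub-case $\lambda_i=b_i$ (so rescaled $b=1$, rescaled $a=a_i/b_i\to 0$), what you presumably intend by ``the side playing the role of `$a$'\,'' is that one swaps the roles of $a$ and $b$ in the expansion---legitimate since the law of cosines is symmetric in the two adjacent sides---after which the rescaled short side tends to $0$ and the argument runs, yielding the $k$-independent leading term $-(c_i-b_i)/a_i$ that you identify. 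With that reading your outline is complete.
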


The lemma should be a known fact to experts because
the calculation is only on $\Bbb S_k^2$ and $\Bbb S_{k'}^2$, so we omit its proof.
For the case where both $|p_ir_i|$ and $|p_iq_i|$ converge to 0 as $i\to\infty$ in Lemma \ref{lem3.2},
the conclusion is used as a fact in [BGP] to define angles in Alexandrov spaces.

\begin{proof}[Proof for (C1) of Theorem C]\
	
Let $F$ be a locally quasi-convex subset in $X\ (\in\text{\rm Alex}(k))$, and $p\in F$.
If $\Sigma_p F$ is not quasi-convex in $\Sigma_p X\ (\in\text{\rm Alex}(1))$, we will derive a contradiction to the local quasi-convexity of $F$.

Note that if $\Sigma_p F$ is not quasi-convex in $\Sigma_p X$, then there exists an $\eta'\in \Sigma_p X\setminus \Sigma_p F$ such that $\dist_{\eta'}|_{\Sigma_p F}$ attains a
minimum at $\xi\in \Sigma_p F$ and there is $\zeta\in \Sigma_pF$ satisfying
\begin{equation} \label{eqn3.1}
|\Uparrow_{\xi}^{\eta'}\Uparrow_{\xi}^{\zeta}|>\frac\pi2
\end{equation}
(refer to Lemma \ref{lem1.2} for the notation `$\Uparrow$'). Note that it has to hold that
\begin{equation} \label{eqn3.2}
|\xi\zeta|<\pi
\end{equation}
(if $|\xi\zeta|=\pi$, then $\Sigma_pX$ is a suspension $\{\xi,\zeta\}*Y$ with $Y\in\text{\rm Alex}(1)$,
which implies that $\Uparrow_{\xi}^{\zeta}=\Sigma_{\xi}(\Sigma_pX)$, a contradiction to (\ref{eqn3.1})).
Take an arbitrary $[\eta'\xi]$, and let $\eta\in [\eta'\xi]^\circ$. Note that
\begin{equation} \label{eqn3.3}
\text{$\xi$ is the unique minimum point of $\dist_{\eta}|_{\Sigma_p F}$},
\end{equation}
and
$|\Uparrow_{\xi}^{\eta}\Uparrow_{\xi}^{\zeta}|=|\uparrow_{\xi}^{\eta}\Uparrow_{\xi}^{\zeta}|
\geqslant|\Uparrow_{\xi}^{\eta'}\Uparrow_{\xi}^{\zeta}|>\frac\pi2$.
And thus if we let
\begin{equation} \label{eqn3.4}
\text{$\eta$ be sufficiently close to $\xi$ (along $[\eta'\xi]$)},
\end{equation}
then by Lemma \ref{lem1.2} we can assume that
\begin{equation} \label{eqn3.5}
\tilde\angle_1\eta\xi\zeta>\frac\pi2.
\end{equation}
Consider $\{p_i\}_{i=1}^\infty\subset F$ and $\{r_i\}_{i=1}^\infty\subset F$ with $p_i,r_i\to p$,
$\uparrow_{p}^{p_i}\to \xi$ and $\uparrow_{p}^{r_i}\to \zeta$ as $i\to\infty$ and
\begin{equation} \label{eqn3.6}
|pr_i|\geqslant|pp_i|.
\end{equation}
Furthermore, we can select $q_i\in X\setminus F$ such that $\uparrow_{p}^{q_i}\to \eta$ as $i\to\infty$ (note that $\eta\notin\Sigma_pF$) and
\begin{equation} \label{eqn3.7}
|pq_i|\cdot \cos |\eta\xi|=|pp_i|.
\end{equation}
By (\ref{eqn3.4}) and (\ref{eqn3.7}), $q_i$ also converges to $p$ as $i\to\infty$. Since there is a neighborhood $U$ of $p$ such that $F\cap U$ is closed in $X$ (by the local quasi-convexity of $F$),
for large $i$ there is $s_i\in F\cap U$ such that $|q_is_i|=|q_iF|$.
Note that $s_i$ also converges to $p$ as $i\to\infty$ because $|q_is_i|\leqslant |q_ip|$.
We claim that $|\Uparrow_{s_i}^{q_i}\Uparrow_{s_i}^{r_i}|>\frac\pi2$ for large $i$.
This obviously contradicts the local quasi-convexity of $F$, so it suffices to verify the claim in the rest of the proof.

First of all, passing to a subsequence we can assume that $\uparrow_p^{s_i}\to\xi'\in\Sigma_pF$ as $i\to\infty$;
and we will show that
\begin{equation} \label{eqn3.8}
\xi'=\xi,\ \lim_{i\to\infty}\frac{|ps_i|}{|pq_i|}=\cos|\eta\xi|\text{ and } \lim_{i\to\infty}\frac{|ps_i|}{|pp_i|}=1.
\end{equation}
A known fact is that $(\frac{1}{|pq_i|}X,p)\to C_pX$ as $i\to\infty$ (\cite{BGP}), where $\frac{1}{|pq_i|}X$ denotes the $X$ endowed with a metric $\frac{1}{|pq_i|}$
times than the original one of $X$. It then follows that
$$
\lim_{i\to\infty}\frac{|q_ip_i|}{|pq_i|}=\sin|\eta\xi| \text{ and } \lim_{i\to\infty}\frac{|q_is_i|}{|pq_i|}=\sin |\eta\xi'|,
$$
and $|\eta\xi'|\leqslant\frac\pi2$ because $|q_is_i|\leqslant |pq_i|$.
These plus `$|\eta\xi|\leqslant |\eta\xi'|$ and $|q_ip_i|\geqslant |q_is_i|$' imply that
$\lim\limits_{i\to\infty}\frac{|q_is_i|}{|q_ip_i|}=1$ and $|\eta\xi|=|\eta\xi'|$,
and thus  (\ref{eqn3.8}) follows if we take into account (\ref{eqn3.3}) and (\ref{eqn3.7}).

Next, we will show $\tilde\angle_kq_is_ir_i>\frac\pi2$ for large $i$,
which implies that $|\Uparrow_{s_i}^{q_i}\Uparrow_{s_i}^{r_i}|>\frac\pi2$ by Theorem \ref{thm1.1},
i.e. the claim is true (and the proof is finished). For simpleness, we can just consider a special case where $r_i$ can be selected to satisfy $\lim\limits_{i\to\infty}\frac{|pp_i|}{|pr_i|}=0$ (cf. (\ref{eqn3.6})), which implies
\begin{equation}\label{eqn3.9}
\text{$\lim\limits_{i\to\infty}\frac{|ps_i|}{|pr_i|}=\lim\limits_{i\to\infty}\frac{|pq_i|}{|pr_i|}=0$\ \ and\ \
$\lim\limits_{i\to\infty}\frac{|pr_i|}{|r_is_i|}=1$}.
\end{equation}
By Lemma \ref{lem3.2}, it suffices to show that
$\lim\limits_{i\to\infty}\tilde\angle_0q_is_ir_i>\frac\pi2$, which is equivalent to
\begin{equation}\label{eqn3.10}
\lim\limits_{i\to\infty}\frac{|r_is_i|^2+|q_is_i|^2-|q_ir_i|^2}{2|r_is_i||q_is_i|}<0.
\end{equation}
By the Law of Cosine,
\begin{align*}
\hskip1cm & |r_is_i|^2+|q_is_i|^2-|q_ir_i|^2\\
\hskip1cm=&2|ps_i|^2-2|pr_i|\cdot|ps_i|\cos\tilde{\angle}_0 r_ips_i-2|ps_i|\cdot|pq_i|\cos \tilde{\angle}_0 s_i p q_i+2|pr_i|\cdot|pq_i|\cos \tilde{\angle}_0 r_i p q_i\\
\hskip1cm=&2|ps_i|\cdot|pq_i|\left(\frac{|ps_i|}{|pq_i|}-\cos \tilde{\angle}_0 s_i p q_i\right) +
  2|pr_i|\cdot|pq_i|\left(\cos \tilde{\angle}_0 r_i p q_i-\frac{|ps_i|}{|pq_i|}\cos \tilde{\angle}_0 r_i p s_i\right)\hskip1cm \text{(3.11)}\\
\triangleq & 2|ps_i|\cdot|pq_i|\cdot\text{I}+2|pr_i|\cdot|pq_i|\cdot\text{I\!I}.
\end{align*}
Note that, by Lemmas \ref{lem3.1} and \ref{lem3.2} together with the second term of (\ref{eqn3.8}), we have that
$$\lim_{i\to\infty}\text{I}=0 \text{ and } \lim_{i\to\infty}\text{I\!I}=\cos|\eta\zeta|-\cos|\eta\xi|\cos|\xi\zeta|=
\sin|\eta\xi|\sin|\xi\zeta|\cos\tilde\angle_1\eta\xi\zeta\eqno{(3.12)}$$
(for the last `$=$' notice that $\eta,\xi,\zeta\in\Sigma_pX\in\text{Alex}(1)$).
Putting (\ref{eqn3.9}), (3.11), (3.12) and
`$\lim\limits_{i\to\infty}\frac{|pq_i|}{|q_is_i|}=\frac{1}{\sin|\eta\xi|}$' together, we can see that
$$\lim\limits_{i\to\infty}\frac{|r_is_i|^2+|q_is_i|^2-|q_ir_i|^2}{2|r_is_i||q_is_i|}
=\sin|\xi\zeta|\cos\tilde\angle_1\eta\xi\zeta,$$ which implies (\ref{eqn3.10}) by (\ref{eqn3.5}).
\end{proof}	

We would like to point out that, in the proof right above, in order to show $\tilde\angle_kq_is_ir_i>\frac\pi2$ for large $i$ when $k\geqslant 0$, it suffices to show $\tilde\angle_0q_is_ir_i>\frac\pi2$
(i.e. $|r_is_i|^2+|q_is_i|^2-|q_ir_i|^2<0$) by Theorem \ref{thm1.1}.

\begin{remark}\label{rem3.0}{\rm In proving that $\Sigma_p F$ is extremal in $\Sigma_p X$
if $F$ is extremal in $X$, $\zeta$ is not necessary in $\Sigma_pF$
and can be close to $\xi$. Consequently, $\{r_i\}_{i=1}^\infty$ are not needed to
belong to $F$, and can be selected to satisfy $|pr_i|\cdot \cos |\zeta\xi|=|pp_i|$ (cf. (\ref{eqn3.7})).
Then from the fact that $(\frac{1}{|pq_i|}X,p)\to C_pX$ as $i\to\infty$, it will be easy to see `$\tilde\angle_kq_is_ir_i>\frac\pi2$' for large $i$,
which contradicts the extremal property of $F$ (\cite{PP1}).}
\end{remark}

\begin{remark}\label{rem3.4}{\rm
For a locally quasi-convex subset $F$, we can define a dimension by induction
due to (C1) of Theorem C (similar for an extremal subset in \cite{PP1}):
If $F$ is an empty set, then $\dim F$ is defined to be $-1$;
otherwise, it is defined to be $\max\limits_{p\in F} \{\dim \Sigma_p F\}+1$.}
\end{remark}

We end this section with a corollary from the proof for (C1) of Theorem C, which will play a key role in the proof for (C2) (see (\ref{eqn5.9}) below).

\begin{coro}\label{cor3.5} Let $F$ be a quasi-convex subset in $X\in\text{\rm Alex}(k)$, and let $p, r\in F$. Suppose that
there is $\eta'\in \Sigma_{p} X\setminus\Sigma_pF$ and $\xi\in \Sigma_pF$ satisfying $|\eta' \xi|=|\eta' \Sigma_pF|$.
If $|\bar\zeta \xi|=|\Uparrow_p^r \xi|$ for any $\bar\zeta\in \Uparrow_p^r$, then there is $\zeta\in \Uparrow_p^r$ such that
$$
|\Uparrow_{\xi}^{\eta'}\Uparrow_{\xi}^{\zeta}|\leqslant\frac\pi2 \ (\text{which implies } \tilde \angle_1 \eta' \xi \zeta\leqslant\frac{\pi}{2}).
$$
\end{coro}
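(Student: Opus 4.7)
The argument will proceed by contradiction, following the blueprint of the proof of (C1) with the ``target'' direction now constrained to lie in $\Uparrow_p^r$. Assume the conclusion fails, so that $|\Uparrow_\xi^{\eta'}\Uparrow_\xi^\zeta|>\frac{\pi}{2}$ for every $\zeta\in\Uparrow_p^r$. The map $f\colon\zeta\mapsto|\Uparrow_\xi^{\eta'}\Uparrow_\xi^\zeta|$ is lower semicontinuous on $\Uparrow_p^r$ (extract subsequential limits of realizing directions and apply Arzel\`a--Ascoli to the corresponding geodesics $[\xi\zeta_n]\to[\xi\zeta]$ in $\Sigma_pX$), so compactness of $\Uparrow_p^r$ upgrades the strict inequality $f>\frac{\pi}{2}$ to a uniform bound $f\geqslant\frac{\pi}{2}+\delta$ for some $\delta>0$.

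I then reproduce the construction from the proof of (C1). Fix $\eta\in[\xi\eta']$ close enough to $\xi$ (with $\eta\neq\xi$, so that the triangle inequality forces $\eta\notin\Sigma_pF$), and choose $p_i\in F$ with $\uparrow_p^{p_i}\to\xi$, $q_i\in X\setminus F$ with $\uparrow_p^{q_i}\to\eta$ and $|pq_i|\cos|\eta\xi|=|pp_i|$, and $s_i\in F$ with $|q_is_i|=|q_iF|$. The estimates leading to (3.8) go through verbatim, so $\uparrow_p^{s_i}\to\xi$ and $|ps_i|/|pq_i|\to\cos|\eta\xi|$. Now apply the identity (3.11) with the fixed $r$ in place of $r_i$: the ``I'' term again tends to $0$, while the first variation for $\dist_r$ at $p$ gives $\cos\tilde\angle_0 rpq_i\to\cos|\eta\Uparrow_p^r|$ and $\cos\tilde\angle_0 rps_i\to\cos c$ with $c=|\xi\Uparrow_p^r|$, so the ``I\!I'' term tends to $\cos|\eta\Uparrow_p^r|-\cos|\eta\xi|\cos c$. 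Since $|\bar\zeta\xi|=c$ for all $\bar\zeta\in\Uparrow_p^r$, the spherical cosine law in $\Sigma_pX\in\text{Alex}(1)$ rewrites this limit as $\sin|\eta\xi|\sin c\cdot\cos\bigl(\min_{\bar\zeta\in\Uparrow_p^r}\tilde\angle_1\eta\xi\bar\zeta\bigr)$. Dividing by $2|rs_i||q_is_i|$ using $|q_is_i|/|pq_i|\to\sin|\eta\xi|$ and $|rs_i|\to|pr|$ yields
\[
\lim_{i\to\infty}\cos\tilde\angle_0 q_is_ir=\sin c\cdot\cos\Bigl(\min_{\bar\zeta\in\Uparrow_p^r}\tilde\angle_1\eta\xi\bar\zeta\Bigr).
\]

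To finish, I show this limit is negative, equivalently $\min_{\bar\zeta}\tilde\angle_1\eta\xi\bar\zeta>\frac{\pi}{2}$. For each $\bar\zeta\in\Uparrow_p^r$ fixed, the first variation of $\dist_{\bar\zeta}$ along $[\xi\eta']$ in $\Sigma_pX$ combined with the spherical cosine law yields $\cos\tilde\angle_1\eta\xi\bar\zeta\to\cos|\uparrow_\xi^{\eta'}\Uparrow_\xi^{\bar\zeta}|$ as $\eta\to\xi$; together with the uniform bound $|\uparrow_\xi^{\eta'}\Uparrow_\xi^{\bar\zeta}|\geqslant f(\bar\zeta)\geqslant\frac{\pi}{2}+\delta$ and a compactness argument controlling the remainder uniformly over $\bar\zeta$, one can select $\eta$ close enough to $\xi$ to force $\tilde\angle_1\eta\xi\bar\zeta>\frac{\pi}{2}$ for every $\bar\zeta\in\Uparrow_p^r$. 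The displayed limit is then negative, so $\tilde\angle_0 q_is_ir>\frac{\pi}{2}$ for large $i$; Lemma~\ref{lem3.2} transfers this to $\tilde\angle_k q_is_ir>\frac{\pi}{2}$. But Proposition B applied to the quasi-convex $F$, the minimum point $s_i$ of $\dist_{q_i}|_F$, and $r\in F\setminus\{s_i\}$ (valid for large $i$ since $s_i\to p\neq r$) gives $\tilde\angle_k q_is_ir\leqslant\frac{\pi}{2}$, the sought contradiction. The main obstacle I anticipate is exactly this uniform first-variation estimate over the compact set $\Uparrow_p^r$: one must control the $o(|\eta\xi|)$ remainder in the expansion $|\eta\bar\zeta|=c-|\eta\xi|\cos|\uparrow_\xi^{\eta'}\Uparrow_\xi^{\bar\zeta}|+o(|\eta\xi|)$ uniformly in $\bar\zeta$, which relies on the continuous dependence of minimizing geodesics $[\xi\bar\zeta]$ on $\bar\zeta$.
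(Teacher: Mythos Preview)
Your approach mirrors the paper's: reproduce the (C1) argument with the fixed point $r$ in place of the sequence $r_i$, invoking Lemma~\ref{lem3.6} rather than Lemma~\ref{lem3.1} to evaluate the $\text{I\!I}$-term. The paper's sketch simply notes that the hypothesis $|\bar\zeta\xi|=c$ for all $\bar\zeta\in\Uparrow_p^r$ lets one pick a single $\zeta$ realizing both $|\Uparrow_p^r\eta|$ and $|\Uparrow_p^r\xi|$; your computation with $\min_{\bar\zeta}\tilde\angle_1\eta\xi\bar\zeta$ is an equivalent reformulation of the same step.

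The uniformity issue you flag---choosing $\eta$ so that $\tilde\angle_1\eta\xi\bar\zeta>\frac\pi2$ holds for \emph{every} $\bar\zeta\in\Uparrow_p^r$, not just a pre-selected one---is genuine, and the paper's sketch does not address it explicitly (there $\zeta$ tacitly depends on $\eta$). However, your proposed resolution via ``continuous dependence of minimizing geodesics $[\xi\bar\zeta]$ on $\bar\zeta$'' is not available in general Alexandrov spaces, so controlling the $o(|\eta\xi|)$ remainder this way is not justified. The clean fix uses monotonicity of comparison angles instead: for each fixed $\bar\zeta$, the map $s\mapsto\tilde\angle_1\bar\zeta\,\xi\,\eta(s)$ (with $\eta(s)\in[\xi\eta']$ at distance $s$ from $\xi$) is non-increasing in $s$ by Toponogov, and by Lemma~\ref{lem1.2} its limit as $s\to0^+$ equals $|\uparrow_\xi^{\eta'}\Uparrow_\xi^{\bar\zeta}|\geqslant\frac\pi2+\delta$. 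Hence the sets
\[
B_s=\Bigl\{\bar\zeta\in\Uparrow_p^r:\tilde\angle_1\bar\zeta\,\xi\,\eta(s)\leqslant\tfrac\pi2\Bigr\}
\]
are compact (the comparison angle is continuous in $|\bar\zeta\eta(s)|$, and $|\bar\zeta\xi|\equiv c$), nested decreasing as $s\searrow0$, with empty intersection. Thus $B_{s_0}=\emptyset$ for some $s_0>0$, and $\eta=\eta(s_0)$ does the job. With this substitution your argument is complete and agrees with the paper's intended proof.
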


\begin{proof} The proof is almost a copy of the proof for (C1) of Theorem C. The main difference is that
$r$ is a fixed point here (there is no sequence $\{r_i\}_{i=1}^\infty$, and we do not care whether $\bar\zeta\in\Sigma_pF$ or not for $\bar\zeta\in \Uparrow_p^r$).
Moreover, for the corresponding $\eta$ in the proof for (C1), the condition `$|\bar\zeta \xi|=|\Uparrow_p^r \xi|$ for any $\bar\zeta\in \Uparrow_p^r$' guarantees that there is $\zeta\in \Uparrow_p^r$ such that
$|\zeta \eta|=|\Uparrow_p^r \eta|$ and $|\zeta \xi|=|\Uparrow_p^r \xi|$.
As a result, we should apply Lemma \ref{lem3.6} below instead of Lemma \ref{lem3.1}
in estimating `$\cos\tilde{\angle}_0 rp q_i-\cos|\eta\zeta|$' and `$\cos\tilde{\angle}_0 rp s_i-\cos|\xi\zeta|$' (in order to show $\lim\limits_{i\to\infty}\tilde\angle_0q_is_ir>\frac\pi2$, see the `\text{I\!I}' in (3.11) and (3.12)).
\end{proof}	

\begin{lemma}[\cite{BGP}]\label{lem3.6}
Let $X\in\text{\rm Alex}(k)$, and let $q, p, r_i\in X$ with $r_i\to p$ as $i\to\infty$.
If there is $[pr_i]$ such that $\uparrow_p^{r_i}$ converges to $\xi\ (\in\Sigma_pX)$ as $i\to\infty$,
then
$$\lim_{i\to\infty}\frac{|qr_i|-|qp|}{|pr_i|}=-\cos|\Uparrow_p^{q}\xi|.$$
As a result, $\lim\limits_{i\to\infty}\tilde\angle_kqpr_i=|\Uparrow_p^{q}\xi|$.
\end{lemma}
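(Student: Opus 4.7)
The plan is to first establish the comparison-angle limit
$$\lim_i \tilde\angle_k q p r_i = |\Uparrow_p^q \xi|,$$
and then read off the distance-ratio formula by Taylor-expanding the $\Bbb S^2_k$ law of cosines. Since $|qp|$ is fixed (and, if $k>0$, bounded by $\frac{\pi}{\sqrt k}$) while $|pr_i| \to 0$, the law of cosines on the comparison triangle yields the asymptotic identity
$$\frac{|qr_i| - |qp|}{|pr_i|} = -\cos\tilde\angle_k q p r_i + O(|pr_i|),$$
so once the angle limit is in hand the distance-ratio formula drops out, with Lemma \ref{lem3.2} covering any mismatch between $\tilde\angle_k$ and $\tilde\angle_0$.

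For the upper bound $\limsup_i \tilde\angle_k q p r_i \leqslant |\Uparrow_p^q \xi|$, I fix any $\uparrow_p^q \in \Uparrow_p^q$ and apply Theorem \ref{thm1.1} to obtain $\tilde\angle_k q p r_i \leqslant |\uparrow_p^q \uparrow_p^{r_i}|$. Since $\uparrow_p^{r_i} \to \xi$ and the metric on $\Sigma_p X$ is continuous, $\limsup_i \tilde\angle_k q p r_i \leqslant |\uparrow_p^q \xi|$. Because $X$ is finite-dimensional, $\Sigma_p X$ is compact, so the closed subset $\Uparrow_p^q$ is compact as well, and taking the infimum over $\uparrow_p^q \in \Uparrow_p^q$ realizes $|\Uparrow_p^q \xi|$ and closes the upper bound.

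For the matching lower bound $\liminf_i \tilde\angle_k q p r_i \geqslant |\Uparrow_p^q \xi|$, I pass to a subsequence along which $\tilde\angle_k q p r_i \to \theta_0$, and will produce $\bar\xi \in \Uparrow_p^q$ with $|\bar\xi \xi| \leqslant \theta_0$, which forces $|\Uparrow_p^q \xi| \leqslant \theta_0$. To obtain $\bar\xi$, I pick minimal geodesics $[qr_i]$ and form the broken paths $[qr_i]\cup[r_i p]$ from $q$ to $p$; their lengths $|qr_i|+|pr_i|$ tend to $|qp|$ since $r_i \to p$, so by an Arzel\`a--Ascoli argument a subsequence converges to a minimal geodesic $[qp]$ whose direction at $p$ is the desired $\bar\xi$. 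To bound $|\bar\xi \xi|$ I apply Theorem \ref{thm1.1} at the corner $r_i$: $|\uparrow_{r_i}^q \uparrow_{r_i}^p| \geqslant \tilde\angle_k q r_i p$. In the Euclidean angle sum $\tilde\angle_0 q p r_i + \tilde\angle_0 r_i q p + \tilde\angle_0 q r_i p = \pi$, the first term tends to $\theta_0$ (via Lemma \ref{lem3.2}) and the second to $0$ (since $|pr_i|/|qp| \to 0$), hence $\tilde\angle_k q r_i p \to \pi - \theta_0$ (again by Lemma \ref{lem3.2}). On the other hand, under the rescaling $(\frac{1}{|pr_i|}X, p) \to C_pX$ the point $r_i$ is identified with $\xi$ at unit distance from the apex, and a direct cone computation shows $|\uparrow_{r_i}^q \uparrow_{r_i}^p| \to \pi - |\bar\xi \xi|$. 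Comparing the two limits yields $|\bar\xi \xi| \leqslant \theta_0$.

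The main obstacle is the rigorous justification of $|\uparrow_{r_i}^q \uparrow_{r_i}^p| \to \pi - |\bar\xi \xi|$: these angles live at the moving base points $r_i$, and their identification with the single angle $|\bar\xi \xi|$ at $p$ must proceed via the Gromov--Hausdorff convergence $(\frac{1}{|pr_i|}X, p) \to C_pX$ together with the explicit cone geometry (from the point $\xi$ at unit distance, the direction to the apex and the direction to a far-away point in direction $\bar\xi$ subtend exactly $\pi - |\bar\xi \xi|$). Once this convergence is in place, the remainder of the proof is a routine assembly of Toponogov's comparison, the Euclidean angle sum, and the cosine-law Taylor expansion.
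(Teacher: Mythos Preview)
The paper does not prove Lemma~\ref{lem3.6}; it is quoted from \cite{BGP} without argument, so there is no in-paper proof to compare against. I therefore assess your proposal on its own.

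Your upper bound via Theorem~\ref{thm1.1} and the cosine-law expansion are fine. The gap is in the lower bound, precisely at the step you flag as ``the main obstacle'': the assertion that $|\uparrow_{r_i}^{q}\uparrow_{r_i}^{p}|\to \pi-|\bar\xi\,\xi|$ under the blow-up $(\frac{1}{|pr_i|}X,p)\to C_pX$. Two things fail here. First, under this rescaling $q$ escapes to infinity, so there is no limiting geodesic in $C_pX$ whose initial direction at $(1,\xi)$ you could read off; making sense of ``the direction to a far-away point in direction $\bar\xi$'' already presupposes convergence of the initial segments of $[r_iq]$, which is not given by the Arzel\`a--Ascoli convergence of $[qr_i]\cup[r_ip]$ (that convergence is in the \emph{unrescaled} metric and says nothing about directions at $r_i$). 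Second, even granting some limiting picture, angles in Alexandrov spaces are only \emph{lower} semicontinuous under Gromov--Hausdorff convergence, so at best you would get $\liminf_i|\uparrow_{r_i}^{q}\uparrow_{r_i}^{p}|\geqslant \pi-|\bar\xi\,\xi|$. Combining this with Toponogov at $r_i$, namely $|\uparrow_{r_i}^{q}\uparrow_{r_i}^{p}|\geqslant \tilde\angle_k qr_ip\to \pi-\theta_0$, yields two lower bounds on the same $\liminf$ and no relation whatsoever between $\theta_0$ and $|\bar\xi\,\xi|$. Your conclusion $|\bar\xi\,\xi|\leqslant\theta_0$ would require the \emph{upper} semicontinuity direction, which is false in general.

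The standard route to the lower bound (as in \cite{BGP}) avoids angles at the moving point $r_i$ altogether: one works at $p$ using the hinge form of Toponogov on the ``opposite'' side. If $[qp]$ (approximately) extends past $p$ to some $q'$ with $|\uparrow_p^{q}\uparrow_p^{q'}|$ close to $\pi$, then the hinge inequality bounds $|q'r_i|$ from above by $|pq'|+|pr_i|\cos|\uparrow_p^{q}\uparrow_p^{r_i}|+o(|pr_i|)$, and the triangle inequality $|qr_i|\geqslant|qp|+|pq'|-|q'r_i|$ then gives $\frac{|qr_i|-|qp|}{|pr_i|}\geqslant -\cos|\uparrow_p^{q}\uparrow_p^{r_i}|-o(1)$. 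Taking $\uparrow_p^{q}$ to realize $|\Uparrow_p^{q}\xi|$ and letting $i\to\infty$ finishes the lower bound; the approximate-extension step is handled in \cite{BGP} by a density/strainer argument. You should replace your blow-up step with this hinge argument.
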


Note that if $r_i\to p$ and some $\uparrow_p^{r_i}\to\xi$ as $i\to\infty$, then $\Uparrow_p^{r_i}\to\xi$ as $i\to\infty$ (\cite{BGP}). And Lemma \ref{lem1.2} is just Lemma \ref{lem3.6} in a special case where $r_i$ lies in a minimal geodesic for all large $i$.


\section{Proof and applications of (C3)}

In this section, we will explore properties of quasi-convex subsets in Alexandrov spaces with curvature $\geqslant1$
including (C3) of Theorem C. Some of them will play an important role in the proof of (C2).

\subsection{Proof of (C3)}

In an $X\in \text{\rm Alex}(1)$, let $F$ be a closed and
locally quasi-convex subset containing no isolated point, or a quasi-convex subset containing at least two points.
We will prove that $|qF|\leqslant \frac\pi2$ for any $q\in X$, and the equality
implies that $|qp|=\frac\pi2$ for all $p\in F$ (i.e. (C3)).	

\begin{proof} We first consider the case where $F$ is closed and
locally quasi-convex and contains no isolated point. Note that we can assume that $F$ is connected (because
each component of $F$ is closed and locally quasi-convex).
Since $F$ is closed (and $X$ is compact, \cite{BGP}), there is $p_0\in F$ such that $|qp_0|=|qF|$.
By Definition A together with Proposition B, there is a neighborhood $U$ of $p_0$
such that $\tilde\angle_1 qp_0p \leqslant \frac{\pi}{2}$ for any $p\in F\cap U\setminus \{p_0\}$
(note that $F\cap U\setminus \{p_0\}\neq\emptyset$).
This plus `$|qp|\geqslant |qp_0|$' implies that $|qp_0|\leqslant\frac\pi2$, i.e. $|qF|\leqslant\frac\pi2$.
Moreover, if $|qp_0|=\frac\pi2$, it has to hold that $\tilde\angle_1 qp_0p=\frac\pi2$
and $|qp|=\frac\pi2$.
So, by the openness and closeness argument we can conclude that
$|qp|=\frac\pi2$ for all $p\in F$.

As for the case where $F$ is quasi-convex and contains at least two points, we just need to put $U=X$ in the proof right above.
\end{proof}

As a generalized version of (C3), we in fact can prove:

\begin{prop}\label{pro4.2}
Let $F$ be a locally quasi-convex subset in a $Y\in \text{\rm Alex}(1)$, and let $q\in Y$.
Suppose that $\dist_q|_F$ attains a local minimum
at $p_0\in F$. If $p_0$ is not an isolated point of $F$, then $|qp_0|\leqslant \frac\pi2$, and the equality
implies that there is a neighborhood $U$ of $p_0$ such that $|qp|=\frac\pi2$ for all $p\in F\cap U$.
\end{prop}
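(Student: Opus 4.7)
The plan is to mimic the proof of (C3) above, combined with the perturbation trick of Proposition \ref{pro1.3}: I would perturb $q$ to an auxiliary point $q'\in[p_0q]^\circ$ close to $p_0$ and use $q'$ to upgrade the local minimum of $\dist_q|_F$ at $p_0$ into a genuine global minimum of $\dist_{q'}|_F$ on the whole of $F$, which is what the defining condition of local quasi-convexity requires in order to produce angle information at $p_0$.

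After dispatching the trivial case $q=p_0$, I would first fix the neighborhood $U_{p_0}$ coming from local quasi-convexity (so that $F\cap U_{p_0}$ is closed in $Y$ and the quasi-convexity condition applies there), then shrink to an open neighborhood $V$ of $p_0$ with $\overline V\subset U_{p_0}$ on which $|qp'|\geqslant|qp_0|$ for every $p'\in F\cap V$, and record $\rho:=\dist(p_0,Y\setminus V)>0$. The key step is then to choose $q'\in[p_0q]^\circ$ with $|q'p_0|<\rho/2$ and verify (i) $q'\notin F$ (otherwise $q'\in F\cap V$ together with $|qq'|<|qp_0|$ would violate local minimality) and (ii) $\dist_{q'}|_F$ attains a global minimum at $p_0$: on $F\cap V$ this follows from $|q'p'|\geqslant|qp'|-|qq'|\geqslant|q'p_0|$, and on $F\setminus V$ from $|q'p'|\geqslant\rho-|q'p_0|>|q'p_0|$.

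With this reduction the definition of local quasi-convexity at $q'$ yields some $[p_0q']$ and some $[p_0r]$ with $|\uparrow_{p_0}^{q'}\uparrow_{p_0}^r|\leqslant\frac\pi2$ for every $r\in F\cap U_{p_0}\setminus\{p_0\}$. Since $\uparrow_{p_0}^{q'}=\uparrow_{p_0}^q$ along the chosen $[p_0q]$, Theorem \ref{thm1.1} converts this into $\tilde\angle_1 qp_0r\leqslant\frac\pi2$. Using that $p_0$ is not isolated in $F$, I would pick such an $r\in F\cap V$ with $0<|p_0r|<\pi$ and feed the spherical cosine law
$$\cos|qr|=\cos|qp_0|\cos|p_0r|+\sin|qp_0|\sin|p_0r|\cos\tilde\angle_1qp_0r,$$
together with $\cos|qr|\leqslant\cos|qp_0|$ (from the local minimum) and $\cos\tilde\angle_1qp_0r\geqslant 0$, to force $\cos|qp_0|\geqslant 0$, i.e. $|qp_0|\leqslant\frac\pi2$. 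In the equality case $|qp_0|=\frac\pi2$, the identity collapses to $\cos|qr|=\sin|p_0r|\cos\tilde\angle_1qp_0r$, which combined with $|qr|\geqslant\frac\pi2$ and $\tilde\angle_1qp_0r\leqslant\frac\pi2$ forces $|qr|=\frac\pi2$ for every such $r$; the desired neighborhood can then be taken to be $V$.

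The main obstacle is precisely the upgrade in step (ii): because $F$ is only locally quasi-convex, the definition delivers angle information only when $\dist_{q'}|_F$ attains a genuinely global minimum on $F$, not merely a minimum on $F\cap U_{p_0}$. The closedness of $F\cap U_{p_0}$ built into the definition is what produces the uniform separation $\rho>0$ and makes the perturbation $q\mapsto q'$ effective; once this is in place the rest of the argument is essentially the proof of (C3) above.
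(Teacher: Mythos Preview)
Your proof is correct and follows essentially the same route as the paper's own argument. The paper dispatches the case $q=p_0$, then invokes ``the local quasi-convexity of $F$ together with Proposition~\ref{pro1.3}'' to obtain $|\Uparrow_{p_0}^q\Uparrow_{p_0}^p|\leqslant\frac\pi2$ for all $p\in F\cap U\setminus\{p_0\}$, and finishes ``similar to the proof for (C3)''; you have simply unpacked the citation of Proposition~\ref{pro1.3} by writing out the perturbation $q\mapsto q'$ explicitly (including the verification that $p_0$ becomes a \emph{global} minimum of $\dist_{q'}|_F$, which the locally quasi-convex definition indeed requires), and then carried out the spherical law of cosines computation that underlies the (C3) argument.
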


For the case where $q$ belongs to $F$ in Proposition \ref{pro4.2}, one can keep the following example in mind: $Y=\{z_1,z_2\}*S^1$ with $|z_1z_2|=\pi$ and $\diam(S^1)\leqslant\frac\pi2$
(cf. (2.1.9)), and $F=\{z_1,z_2\}*\{q,p_0\}$ with $q$ and $p_0$ being a couple of antipodal points of $S^1$.

\begin{proof} Note that we can assume that $p_0\neq q$. Then by the local quasi-convexity of $F$
together with Proposition \ref{pro1.3}, there is a neighborhood $U$ of $p_0$ such that
$|\Uparrow_{p_0}^q\Uparrow_{p_0}^{p}|\leqslant\frac\pi2$ for all $p\in F\cap U\setminus \{p_0\}$.
Then similar to the proof for (C3), we can see that $|qp_0|\leqslant \frac\pi2$ and the equality
implies that $|qp|=\frac\pi2$.
\end{proof}

As an example of the rigidity part in (C3) (and Proposition \ref{pro4.2}),
we have the following observation:

\begin{prop}\label{pro4.1}
Let $F$ be a locally quasi-convex subset in an $X\in \text{\rm Alex}(k)$, and let
$q\in X$. If $\dist_q|_F$ attains a local minimum
at $p\in F$ with $p\neq q$ and $\Sigma_p F\neq\emptyset$, then $|\eta\xi|=\frac{\pi}{2}$
for any $\eta\in \Uparrow_{p}^q$ and any $\xi\in \Sigma_p F$.
As a result,  for some $\bar p\in F$, if $\Sigma_{\bar p}F$ consists of a single point $\xi$,
then $|\zeta\xi|\leqslant\frac{\pi}{2}$ for any $\zeta\in\Sigma_{\bar p}X$.
\end{prop}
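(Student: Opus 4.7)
The plan is to split the main claim into a lower bound $|\eta\xi|\geq\pi/2$ and an upper bound $|\eta\xi|\leq\pi/2$, and then to deduce the ``as a result'' consequence by a density-plus-contradiction argument. For the lower bound I would take a sequence $p_i\in F$ with $p_i\to p$ and $\uparrow_p^{p_i}\to\xi$, use local minimality of $\dist_q|_F$ at $p$ to get $|qp_i|\geq|qp|$ for large $i$, and invoke Lemma~\ref{lem3.6} to obtain $-\cos|\Uparrow_p^q\xi|\geq 0$. Since $|\Uparrow_p^q\xi|=\inf_{\eta\in\Uparrow_p^q}|\eta\xi|\geq\frac{\pi}{2}$, this forces $|\eta\xi|\geq\frac{\pi}{2}$ for every $\eta\in\Uparrow_p^q$ and every $\xi\in\Sigma_p F$.

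For the matching upper bound I would split on the size of $\Sigma_p F$. If $\Sigma_p F$ contains at least two points, (C1) of Theorem C makes it quasi-convex in $\Sigma_p X\in\text{\rm Alex}(1)$, so (C3) supplies $|\eta\Sigma_p F|\leq\frac{\pi}{2}$ together with the rigidity ``equality forces $|\eta\xi|=\frac{\pi}{2}$ for every $\xi\in\Sigma_p F$''; combined with the lower bound this closes the case. If instead $\Sigma_p F=\{\xi\}$, I would follow the strategy of Proposition~\ref{pro1.3}: slide $q'\in[pq]^\circ$ very close to $p$. The lower bound already forces $\eta=\uparrow_p^q\neq\xi$, so $q'\notin F$ (otherwise a sequence $q'_n\to p$ in $F\cap[pq]$ would place $\eta$ in $\Sigma_p F=\{\xi\}$), and a short triangle-inequality argument shows that $p$ is the global minimum of $\dist_{q'}|_F$. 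Along the fixed sub-geodesic $\uparrow_p^{q'}=\eta$, so the locally quasi-convex hypothesis at $p$ supplies, for each $r\in F\cap U_p\setminus\{p\}$, some direction $\uparrow_p^r$ with $|\eta\uparrow_p^r|\leq\frac{\pi}{2}$; letting $r\to p$ forces $\uparrow_p^r\to\xi$ (using $\Sigma_p F=\{\xi\}$), giving $|\eta\xi|\leq\frac{\pi}{2}$.

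For the ``as a result'' consequence I would argue by contradiction. Suppose some $\zeta_0\in\Sigma_{\bar p}X$ has $|\zeta_0\xi|>\frac{\pi}{2}$; the open set $\{\zeta\in\Sigma_{\bar p}X:|\zeta\xi|>\frac{\pi}{2}\}$ then meets the dense set of ``regular'' directions---those $\zeta$ admitting a short minimal geodesic from $\bar p$ to a point having a unique minimizer back to $\bar p$. Pick such a regular $\zeta$ with $|\zeta\xi|>\frac{\pi}{2}$ and let $q$ be the (close) endpoint, so that $\Uparrow_{\bar p}^q=\{\zeta\}$ and hence $|\Uparrow_{\bar p}^q\xi|=|\zeta\xi|>\frac{\pi}{2}$. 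The first variation formula then shows that $\bar p$ is a local minimum of $\dist_q|_F$, and the main claim forces $|\zeta\xi|=\frac{\pi}{2}$---the desired contradiction. The chief obstacles are (i) justifying $\uparrow_p^{q'}=\eta$ in Case B of the upper bound, mirroring the uniqueness assertion in the proof of Proposition~\ref{pro1.3}, and (ii) invoking the density of regular directions in $\Sigma_{\bar p}X$ for the final step; both are standard in Alexandrov geometry but require care.
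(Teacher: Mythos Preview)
Your proposal is correct and follows essentially the paper's approach, with two minor differences worth noting. First, for the upper bound $|\eta\xi|\leqslant\frac{\pi}{2}$ the paper does not split on the size of $\Sigma_pF$: it applies Proposition~\ref{pro1.3} directly to the sequence $p_i\in F$ with $\Uparrow_p^{p_i}\to\xi$ (exactly your Case~B argument), which works uniformly; your Case~A appeal to (C1)+(C3) is therefore unnecessary, although the paper does remark just before its proof that this alternative route also settles the multi-point case. Second, for the ``as a result'' part the paper avoids invoking density of regular directions: rather than forcing $\Uparrow_{\bar p}^{\bar q}$ to be a singleton, it simply chooses a geodesic $[\bar p\bar q]$ with $\uparrow_{\bar p}^{\bar q}$ close to $\zeta$ and satisfying $|\uparrow_{\bar p}^{\bar q}\xi|=|\Uparrow_{\bar p}^{\bar q}\xi|>\frac{\pi}{2}$, which is all Lemma~\ref{lem3.6} needs. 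One small slip: your triangle-inequality argument only yields that $p$ is a \emph{local} minimum of $\dist_{q'}|_F$, not a global one---but that is precisely what Proposition~\ref{pro1.3} requires, so the argument still goes through.
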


This proposition can be viewed as a generalized version of (\ref{eqn0.3}).
By (C1), $\Sigma_pF$ is quasi-convex in $\Sigma_pX$,
so the first statement of Proposition \ref{pro4.1} is a corollary
of (C3) if $\Sigma_pF$ contains at least two points. Anyway, we can provide
a proof directly from Definition A.

\begin{proof} For the first statement, note that there
a $[qp]$ such that $\eta=\uparrow_p^q$, and there is $p_i\in F$ such that
$p_i\to p$ and $\Uparrow_p^{p_i}\to\xi$ as $i\to\infty$. By the local quasi-convexity of $F$ (i.e. Definition A)
together with Proposition \ref{pro1.3}, we have that
$|\eta\Uparrow_{p}^{p_i}|\leqslant\frac\pi2$ for large $i$,
which implies that $|\eta\xi|\leqslant\frac\pi2$. On the other hand, by Lemma \ref{lem3.6}, we can conclude that $|\Uparrow_{p}^q\xi|\geqslant \frac{\pi}{2}$
because $p$ is a local minimal point of $\dist_q|_F$. It therefore follows that $|\eta\xi|=\frac{\pi}{2}$.

For the second statement, if there is $\zeta\in\Sigma_{\bar p}X$ such that $|\zeta\xi|>\frac{\pi}{2}$,
we can find a $[\bar q\bar p]$ with $\uparrow_{\bar p}^{\bar q}$ close to $\zeta$
satisfying $|\uparrow_{\bar p}^{\bar q}\xi|=|\Uparrow_{\bar p}^{\bar q}\xi|>\frac\pi2$. So, by Lemma \ref{lem3.6},
it holds that $|\bar p\bar q|<|p'\bar q|$ for $p'\in F$ sufficiently close to $\bar p$.
Namely, $\dist_{\bar q}|_F$ attains a local minimum at $\bar p$, and thus it follows from the first statement that
$|\uparrow_{\bar p}^{\bar q}\xi|=\frac{\pi}{2}$, a contradiction.
\end{proof}

\subsection{Applications of (C3)}

As an application of (C3) (and Proposition \ref{pro4.2}), we have the following observation on quasi-convex subsets in Alexandrov spaces with curvature $\geqslant1$.

\begin{lemma} \label{lem4.3}
Let $F\subset Y\in \text{\rm Alex}(1)$ be a quasi-convex subset containing at least two points,
and $q\in Y$ ($q$ may lie in $F$). If $\dist_q|_F$ attains a minimum at $p\in F$, then for any $p'\in F$,
$$|qp|+|qp'|\leqslant \pi;$$
and equality holds for some $p'$ if and only if  either $|pp'|=\pi$ or $|q\bar p|=\frac{\pi}{2}$ for all $\bar p\in F$.
\end{lemma}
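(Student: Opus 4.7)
The plan is to treat the easy case $q\in F$ separately, in which $\dist_q|_F$ has minimum $0$ attained only at $p=q$; then $|qp|=0$ and $|qp'|\leqslant\pi$ is just the diameter bound for $Y\in\text{\rm Alex}(1)$, with equality $|qp'|=\pi$ collapsing to the first alternative $|pp'|=\pi$. Hereafter assume $q\notin F$, so (C3) applies and yields $|qp|=|qF|\leqslant\frac{\pi}{2}$.

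The endpoint case $p'=p$ reduces to $2|qp|\leqslant\pi$, with equality iff $|qp|=\frac{\pi}{2}$; by the rigidity part of (C3) this upgrades to $|q\bar p|=\frac{\pi}{2}$ for every $\bar p\in F$, so the second alternative of the equality statement holds.

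For $p'\neq p$ the plan is to combine Proposition B with the spherical law of cosines on $\Bbb S^2_1$. Proposition B gives $\tilde\angle_1 qpp'\leqslant\frac{\pi}{2}$, whence
\[
\cos|qp'|\;=\;\cos|qp|\cos|pp'|+\sin|qp|\sin|pp'|\cos\tilde\angle_1 qpp'\;\geqslant\;\cos|qp|\cos|pp'|\;\geqslant\;-\cos|qp|,
\]
using $\cos|qp|\geqslant 0$ (from $|qp|\leqslant\frac{\pi}{2}$) and $\cos|pp'|\geqslant-1$ in the last step. This is precisely $|qp|+|qp'|\leqslant\pi$. Tracing the chain backward, equality $\cos|qp'|=-\cos|qp|$ forces both $\sin|qp|\sin|pp'|\cos\tilde\angle_1 qpp'=0$ and $\cos|qp|(1+\cos|pp'|)=0$; the latter yields either $|qp|=\frac{\pi}{2}$ (promoted by the rigidity in (C3) to $|q\bar p|\equiv\frac{\pi}{2}$ on $F$) or $|pp'|=\pi$. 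The converse directions are immediate: in the first case $|qp|+|qp'|=\frac{\pi}{2}+\frac{\pi}{2}=\pi$ trivially, and in the second the triangle inequality gives $\pi=|pp'|\leqslant|qp|+|qp'|\leqslant\pi$.

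The only point requiring a little care is the bookkeeping: Proposition B presumes $q\notin F$ and the law-of-cosines step presumes $p'\neq p$, so both edge cases have to be dispatched separately; in each one the equality characterization collapses to a single alternative, so no serious obstacle arises.
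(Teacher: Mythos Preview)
Your proof is correct and follows the same route as the paper's: handle $q\in F$ by the diameter bound, and for $q\notin F$ combine $|qp|\leqslant\frac{\pi}{2}$ from (C3) with $\tilde\angle_1 qpp'\leqslant\frac{\pi}{2}$ from Proposition~B, then read off the conclusion from spherical trigonometry. The paper leaves the last step as ``basic facts on the unit sphere''; you have simply written out the law-of-cosines computation and the equality analysis explicitly, including the edge cases $p'=p$ and $q\in F$ that the paper absorbs into its one-line remark.
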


Lemma \ref{lem4.3} might not be true if $F$ is just only locally quasi-convex.

\begin{proof} If $q\in F$, then the conclusion is trivial because the diameter of $Y$ is less than or equal to $\pi$ (\cite{BGP}).
It remains to consider the case where $q\notin F$. According to (C3), we know that $|qp|\leqslant \frac{\pi}{2}$.
Moreover, the quasi-convexity of $F$ implies that $\tilde\angle_1 qpp' \leqslant \frac{\pi}{2}$ for any $p'\in F\setminus\{p\}$ (see Proposition B). These enable us to view
the conclusion of the lemma as some basic facts on the unit sphere.
\end{proof}

Via Lemma \ref{lem4.3}, we can see a further application of (C3) when considering some kind of sum of $|q_1p|$ and $|q_2p|$ with $q_1,q_2\in Y$ and $p\in F$,
which will play a crucial role in the proof of (C2), i.e. the generalized Liberman theorem (see Lemma \ref{lem5.1} below).

\begin{lemma}\label{lem4.4}
Let $F\subset Y\in \text{\rm Alex}(1)$ be a quasi-convex subset containing at least two points, and
let $x_1, x_2\in Y$. Suppose that there is $a_1\geqslant 0$ and $a_2\geqslant 0$ such that
$a_1\cos |x_1\xi|+a_2\cos |x_2\xi|\leqslant 0$ for all $\xi\in F$. Then the following hold:

\vskip1mm

\noindent {\rm (4.4.1)} If $a_i>0$ and if there is $\xi_i\in F$ such that $|x_i\xi_i|=|x_iF|$
($i=1, 2)$ and $|\xi_1\xi_2|\neq\pi$, then we have that
$|x_1\xi|=|x_2\xi|=\frac\pi2$ for all $\xi\in F$.

\vskip1mm

\noindent {\rm (4.4.2)} We in fact have that $a_1\cos |x_1\xi|+a_2\cos |x_2\xi|=0$ for all $\xi\in F$.
	
\vskip1mm

As a corollary of {\rm (4.4.2)}, for any $\eta\in Y\setminus F$ and $\xi\in F$ with $|\eta\xi|=|\eta F|$, we have that

\vskip1mm

\noindent {\rm (4.4.3)}\ \	 $\text{\rm Sign}\left(a_1\cos |x_1\eta|+a_2\cos |x_2\eta|\right)=
	\text{\rm Sign}\left(a_1\sin|x_1\xi|\cos\tilde\angle_1 x_1\xi\eta+a_2\sin|x_2\xi|\cos\tilde\angle_1 x_2\xi\eta\right).$
\end{lemma}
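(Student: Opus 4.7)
Abbreviating $g(\xi)\triangleq a_1\cos|x_1\xi|+a_2\cos|x_2\xi|$, so the hypothesis reads $g\leq 0$ on $F$, I would prove the three parts in order, with (4.4.2) proved by induction on $\dim Y$.

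For (4.4.1), (C3) first gives $|x_i\xi_i|\leq\frac{\pi}{2}$ (hence $\cos|x_i\xi_i|\geq 0$), and $g(\xi_i)\leq 0$ together with $a_j>0$ then forces $|x_j\xi_i|\geq\frac{\pi}{2}$ for $j\neq i$. Since $|\xi_1\xi_2|\neq\pi$, the equality clause of Lemma \ref{lem4.3} yields, for each $i$, a dichotomy: either $(A_i)$ $|x_i\bar p|=\frac{\pi}{2}$ for every $\bar p\in F$, or $(B_i)$ the strict inequality $|x_i\xi_1|+|x_i\xi_2|<\pi$, equivalently $\cos|x_i\xi_1|+\cos|x_i\xi_2|>0$. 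If $(B_1)$ and $(B_2)$ both held, summing would give $g(\xi_1)+g(\xi_2)>0$, contradicting $g\leq 0$; so $(A_i)$ holds for some $i$, and plugging $\cos|x_i\bar p|=0$ back into $g\leq 0$, combined with (C3) and Lemma \ref{lem4.3} applied to $x_{3-i}$, propagates the rigidity to the other index.

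For (4.4.2), the cases where some $a_i=0$ are handled directly: if only $a_1>0$, then $g\leq 0$ gives $|x_1 p|\geq\frac{\pi}{2}$ on $F$, and (C3) plus Lemma \ref{lem4.3} upgrade this to equality. If both $a_i>0$ and $|\xi_1\xi_2|\neq\pi$, (4.4.1) applies. The remaining sub-case is $|\xi_1\xi_2|=\pi$: then $Y$ splits as a suspension $\{\xi_1,\xi_2\}*Z$ with $Z\in\text{Alex}(1)$, Proposition \ref{prop2.2} gives $F=\{\xi_1,\xi_2\}*(F\cap Z)$ with $F\cap Z$ quasi-convex in $Z$, and the perimeter bound in $\text{Alex}(1)$ forces $|x_i\xi_1|+|x_i\xi_2|=\pi$, so each $x_i$ acquires suspension coordinates $(\tau_i,\hat x_i)$ with $\tau_i=|\xi_1 x_i|$ and $\hat x_i\in Z$. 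The spherical law of cosines on the suspension then yields, for $\xi\in F$ with coordinates $(\tau,\hat\xi)$,
\[
g(\xi)=\cos\tau\bigl(a_1\cos\tau_1+a_2\cos\tau_2\bigr)+\sin\tau\bigl(a_1\sin\tau_1\cos|\hat x_1\hat\xi|+a_2\sin\tau_2\cos|\hat x_2\hat\xi|\bigr).
\]
The antipodal relation $|x_i\xi_2|=\pi-|x_i\xi_1|$ forces $g(\xi_1)+g(\xi_2)=0$, hence $g(\xi_1)=g(\xi_2)=0$ and the constant term vanishes; what remains is $B(\hat\xi)\triangleq a_1\sin\tau_1\cos|\hat x_1\hat\xi|+a_2\sin\tau_2\cos|\hat x_2\hat\xi|\leq 0$ on $F\cap Z$, which is exactly the lemma's hypothesis for $Z$ and $F\cap Z$. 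When $|F\cap Z|\geq 2$ the inductive hypothesis gives $B\equiv 0$, hence $g\equiv 0$ on $F$. The case $F\cap Z=\emptyset$ is immediate; the case $F\cap Z=\{\xi_0\}$ needs a separate argument, extracting from the quasi-convexity of the single meridian $F$ at $\xi_1$ (read through $\Sigma_{\xi_1}Y=Z$) the constraint $Z\subseteq\overline{B_Z(\xi_0,\pi/2)}$, which gives $\cos|\hat x_i\xi_0|\geq 0$, so $B(\xi_0)\geq 0$, whence $B(\xi_0)=0$.

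For (4.4.3), the spherical law of cosines gives, for $|\xi\eta|<\pi$, the identity $\cos|x_i\eta|=\cos|x_i\xi|\cos|\xi\eta|+\sin|x_i\xi|\sin|\xi\eta|\cos\tilde\angle_1 x_i\xi\eta$; summing with weights $a_i$ and using $g(\xi)=0$ from (4.4.2) yields $g(\eta)=\sin|\xi\eta|\cdot h(\xi,\eta)$, where $h$ is the bracketed quantity on the right side of (4.4.3). Since $\sin|\xi\eta|>0$ for $|\xi\eta|\in(0,\pi)$, the claimed sign equality follows; the boundary case $|\xi\eta|=\pi$ is handled separately via the induced suspension structure (both sides then vanish). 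The hardest step, and the main obstacle, is the sub-case $|\xi_1\xi_2|=\pi$ of (4.4.2), which requires combining the suspension decomposition, a spherical-trigonometric expansion of $g$, and an induction on $\dim Y$; the degenerate possibility $|F\cap Z|=1$ is particularly delicate because the inductive hypothesis does not apply and a half-ball constraint on $Z$ must be extracted directly from the quasi-convexity of $F$ at a pole.
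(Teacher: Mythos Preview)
Your argument is correct, and for (4.4.1) and (4.4.3) it matches the paper's proof (the extra ``boundary case $|\xi\eta|=\pi$'' you treat in (4.4.3) never arises, since (C3) gives $|\eta\xi|=|\eta F|\leq\frac\pi2$; the paper notes this and skips it).

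The genuine difference is in (4.4.2), specifically the sub-case $|\xi_1\xi_2|=\pi$. You proceed by induction on $\dim Y$: invoke Proposition~\ref{prop2.2} to split $F=\{\xi_1,\xi_2\}*(F\cap Z)$, expand $g$ in suspension coordinates, kill the constant term via $g(\xi_1)+g(\xi_2)=0$, and then apply the inductive hypothesis on $Z$ to the residual function $B$ (with the degenerate cases $|F\cap Z|\leq 1$ handled separately, the case $|F\cap Z|=1$ needing Proposition~\ref{pro4.1}). The paper instead gives a direct, non-inductive argument: from the first half of the (4.4.1) computation one has $|x_2\xi_1|+|x_2\xi_2|=\pi$ and $a_1\cos|x_1\xi_1|+a_2\cos|x_2\xi_1|=0$ regardless of whether $|\xi_1\xi_2|=\pi$; combining these with $\cos|\xi\xi_1|+\cos|\xi\xi_2|=0$ and with the quasi-convexity inequality $\tilde\angle_1 x_i\xi_i\xi\leq\frac\pi2$ (Proposition~B), the Law of Cosines yields $g(\xi)\geq 0$ in one line. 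The paper's route is shorter, uses only Proposition~B and Lemma~\ref{lem4.3}, and avoids both the suspension machinery and the delicate $|F\cap Z|=1$ case; your route is more structural and shows how the problem reduces dimension, at the cost of importing Propositions~\ref{prop2.2} and~\ref{pro4.1} and tracking several degenerate branches.
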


Note that if $a_1=a_2>0$, then `$a_1\cos |x_1\xi|+a_2\cos |x_2\xi|\leqslant0$' is equivalent to
`$|x_1\xi|+|x_2\xi|\geqslant\pi$'. This together with (\ref{eqn5.8}) below
shows that the condition `$a_1\cos |x_1\xi|+a_2\cos |x_2\xi|\leqslant 0$' in Lemma \ref{lem4.4} is not
an artificial one. Moreover, note that if $\tilde \angle_1 x_i\xi\eta \leqslant \frac{\pi}{2}$ in (4.4.3),
then $a_1\cos |x_1\eta|+a_2\cos |x_2\eta|\geqslant 0$,
which is crucial to the proof of (C2) (see (\ref{eqn5.7}) and (\ref{eqn5.9}) below).

\begin{proof}[Proof of Lemma \ref{lem4.4}]\

\vskip1mm

\noindent (4.4.1)  Since $\xi_i\in F$ and $|x_i\xi_i|=|x_iF|$ for $i=1$ and 2, by Lemma \ref{lem4.3} we have that
$$
|x_1\xi_1|+|x_1\xi_2|\leqslant \pi \text{\quad  and \quad } |x_2\xi_1|+|x_2\xi_2|\leqslant \pi;
$$
or equivalently
\begin{equation}\label{eqn4.1}
 \cos|x_1\xi_1|+\cos|x_1\xi_2|\geqslant 0  \text{\quad  and \quad } \cos|x_2\xi_1|+\cos|x_2\xi_2|\geqslant 0.
\end{equation}
On the other hand, by the assumption we have that
$$
a_1(\cos |x_1\xi_1|+\cos |x_1\xi_2|)+ a_2(\cos |x_2\xi_1| + \cos |x_2\xi_2|) \leqslant 0  \text{ with } a_1,a_2>0,
$$
which together with (\ref{eqn4.1}) implies that
\begin{equation} \label{eqn4.2}
|x_1\xi_1|+|x_1\xi_2|=\pi  \text{\quad  and \quad }  |x_2\xi_1|+|x_2\xi_2|=\pi,
\end{equation}
and
\begin{equation} \label{eqn4.3}
a_1\cos |x_1\xi_1|+a_2\cos |x_2\xi_1|=0  \text{\quad  and \quad }  a_1\cos |x_1\xi_2|+a_2\cos |x_2\xi_2|=0.
\end{equation}
(So far, the proof has no relation with whether $|\xi_1\xi_2|=\pi$ or not.) Since $|\xi_1\xi_2|\neq\pi$,
(\ref{eqn4.2}) implies that $|x_1\xi|=|x_2\xi|=\frac\pi2$ for all $\xi\in F$ by Lemma \ref{lem4.3}.

\vskip1mm

\noindent (4.4.2)  It suffices to show that $a_1\cos |x_1\xi|+a_2\cos |x_2\xi|\geqslant 0$ for all $\xi \in F$.

Note that it is trivial if $a_1=a_2=0$. If only one of $a_i$, say $a_1$, is equal to 0,
then the assumption `$a_1\cos |x_1\xi|+a_2\cos |x_2\xi|\leqslant 0$' is equivalent to
$|x_2\xi|\geqslant\frac\pi2$ for all $\xi\in F$. By (C3), this  implies that
$|x_2\xi|=\frac\pi2$ for all $\xi\in F$, and thus $a_1\cos |x_1\xi|+a_2\cos |x_2\xi|=0$.

We now consider the remaining case where $a_1>0$ and $a_2>0$.
Let $\xi_1, \xi_2\in F$ such that $|x_i\xi_i|=|x_iF|$ for $i=1$ and $2$.
By (4.4.1), we can assume that $|\xi_1\xi_2|=\pi$. In this case, it is obvious that
$|\xi\xi_1|+|\xi\xi_2|=\pi$ for all $\xi \in F$, i.e.,
$$
\cos  |\xi\xi_1|+\cos |\xi\xi_2|=0.
$$
This plus the second equality of (\ref{eqn4.2}) and the first equality of (\ref{eqn4.3})  implies that
$$ a_1\cos |x_1\xi_1|\cos|\xi_1\xi|+a_2\cos |x_2\xi_2|\cos|\xi_2\xi|=0.$$
Meantime, by the quasi-convexity of $F$ (see Proposition B), we have that
\begin{equation} \label{eqn4.4}
\tilde\angle_1 x_i\xi_i\xi\leqslant\frac\pi2 \text{ for } i=1,2.
\end{equation}
Hence, by the Law of Cosine, it follows that for all $\xi\in F$
$$  a_1\cos |x_1\xi|+a_2\cos |x_2\xi|
=\sum_{i=1}^2 a_i(\cos |x_i\xi_i|\cos|\xi_i\xi|+\sin |x_i\xi_i|\sin|\xi_i\xi|\cos \tilde\angle_1 x_i\xi_i\xi)
 \geqslant 0.
$$

\noindent (4.4.3) Note that $|\eta\xi|>0$ because $\eta\not\in F$, and that $|\eta\xi|=|\eta F|<\pi$ by (C3).
And thus (4.4.3) follows from the following computation based on the Law of Cosine:
\begin{align*}
a_1\cos |x_1\eta|+a_2\cos |x_2\eta| & = \sum_{i=1}^{2} a_i(\cos|x_i\xi|\cos|\eta\xi|+\sin|x_i\xi|\sin|\eta\xi|\cos\tilde\angle_1 x_i\xi\eta)\\
& =\cos|\eta\xi|\sum_{i=1}^{2} a_i \cos|x_i\xi| +\sin|\eta\xi| \sum_{i=1}^{2} a_i \sin |x_i\xi|\cos\tilde\angle_1 x_i\xi\eta \\
& =\sin|\eta\xi| \sum_{i=1}^{2} a_i \sin |x_i\xi|\cos\tilde\angle_1 x_i\xi\eta,
\end{align*}
where the third equality is due to (4.4.2).
\end{proof}

\begin{remark}\label{rem4.6} {\rm
In Lemma \ref{lem4.4}, if $F$ consists of a single point $\xi$ and if $|\xi\eta|\leqslant\frac\pi2$ for all $\eta\in Y$,
then `$a_1\cos |x_1\xi|+a_2\cos |x_2\xi|\leqslant 0$' implies that $a_1\cos |x_1\xi|+a_2\cos |x_2\xi|=0$ automatically.
Hence, in this situation, the proof for (4.4.3) still goes through, i.e. (4.4.3) still holds.}
\end{remark}

\begin{remark}\label{rem4.7} {\rm
In Lemma \ref{lem4.4}, if $F$ is extremal in $Y$, note that (\ref{eqn4.4}) holds for all $\xi\in Y$ by (\ref{eqn0.2});
and thus we in fact have that $a_1\cos |x_1\xi|+a_2\cos |x_2\xi|\geqslant0$ for all $\xi\in Y$
from the proof of (4.4.2).
Indeed, this can be concluded from the arguments in \cite{PP1}. However, in our situation,
the key to (4.4.2) is (4.4.1) which relies on Lemma \ref{lem4.3}, while there is no version of (4.4.1) or Lemma \ref{lem4.3}
for extremal case in \cite{PP1} at all.}
\end{remark}


\section{Proof for (C2) of Theorem C}

In this section, we mainly give a proof for (C2) of Theorem C, i.e. the generalized Liberman theorem,
in which we only consider the case where $k=0$ due to the similarity just as in \cite{PP1}.
To the proof for extremal case (\cite{PP1}), the key is the inequality in (\ref{eqn5.4}) below which is
the only geometry due to the extremal property in the proof.
And one can check that the proof in \cite{PP1} except the arguments for (\ref{eqn5.4})
still work for locally quasi-convex case.
Namely, in our situation, in order to prove (C2) we need only to verify that the inequality in (\ref{eqn5.4}) still holds.
Indeed, it is true by Lemma \ref{lem5.1} below, but the proof should be much more
technical as mentioned in Remark \ref{rem0.5}.

Before giving Lemma \ref{lem5.1}, for the convenience of readers, we had better provide
an outline of the proof for extremal case in \cite{PP1}.

First of all, we recall the definition of quasi-geodesics.
In an $X\in \text{\rm Alex}(k)$, let
$\gamma:[a,b]\to X$ be an arc-length parameterized curve, denoted by $\gamma(t)|_{[a,b]}$.
Recall that if $\gamma(t)|_{[a,b]}$ is a minimal geodesic,
`curvature $\geqslant k$' means that, for any $p\in X$ and $t\in (a,b)$ with $p\neq\gamma(t)$,
\begin{equation}\label{eqn5.1}
\tilde\angle_k p\gamma(t)\gamma(a)+\tilde\angle_k p\gamma(t)\gamma(b)\leqslant \pi\
(\text{i.e.\ $\cos\tilde\angle_k p\gamma(t)\gamma(a)+\cos\tilde\angle_k p\gamma(t)\gamma(b)\geqslant 0)$, \cite{BGP})}
\end{equation}
which, in the case where $k=0$, is equivalent to
\begin{equation}\label{eqn5.2}
(t-a)(|p\gamma(b)|^2-|p\gamma(t)|^2)+(b-t)(|p\gamma(t)|^2-|p\gamma(a)|^2)\leqslant (b-a)(t-a)(b-t).
\end{equation}

From now on, we only consider the case where $k=0$. In this case,
$\gamma(t)|_{[a,b]}$ is called to be a {\it quasi-geodesic} if (\ref{eqn5.2}) holds for all $p\in X$ and $t\in (a,b)$
(\cite{PP1}, refer to \cite{PP2} for an equivalent definition in terms of `development'). It is obvious that
quasi-geodesics are as good as geodesics in the sense of comparison geometry.
And it turns out that quasi-geodesics have to be geodesics in a Riemannian manifold (\cite{PP2}), so
quasi-geodesics are very worthy of research (\cite{Pet}).

We now begin to show the idea to prove that $\gamma(t)|_{[a,b]}$ is a quasi-geodesic if it is a shortest path in an extremal subset $F$ with induced intrinsic metric from $X$ (\cite{PP1}).
It turns out that the proof can be reduced to show that there exists
a shortest path $\bar\gamma(t)|_{[a,b]}\subset F$ jointing $\gamma(a)$ and $\gamma(b)$
such that it is a quasi-geodesic. In fact,
this implies that for any partition $a=t_0<t_1<t_2<\cdots<t_l=b$ of $[a,b]$
there is a quasi-geodesic (also a shortest path) in $F$ jointing $\gamma(t_i)$ and $\gamma(t_{i+1})$ for any $1\leqslant i\leqslant l-1$, based on which one can derive that $\gamma(t)|_{[a,b]}$ is a quasi-geodesic.
In order to show the existence of $\bar\gamma(t)|_{[a,b]}$,
for any $m\in\Bbb N^{+}$ we consider
\begin{equation}\label{eqn5.3}
S_m\triangleq\min_{a_0, a_1, \cdots, a_m\in F} m\sum_{i=0}^{m-1}|a_ia_{i+1}|^2,
\end{equation}
where $a_0=\gamma(a)$ and $a_m=\gamma(b)$. Since $\gamma(t)|_{[a,b]}$ is a shortest path jointing $\gamma(a)$ and $\gamma(b)$,
one can show that $S_m$ converges to $(b-a)^2$ as $m\to \infty$,
which implies that each $|a_ia_{i+1}|\to 0$ and $\frac{|a_ja_{j+1}|}{|a_ia_{i+1}|}\to 1$ for all $0\leqslant i,j\leqslant m-1$ as $m\to\infty$.
Let $S_m$ be realized by $\{a_0, a_1, \cdots, a_m\}\subset F$. Passing to a subsequence, we can assume that
$\{a_0, a_1, \cdots, a_m\}$ converges to a shortest path $\bar\gamma(t)|_{[a,b]}$ as $m\to\infty$ jointing $\gamma(a)$ and $\gamma(b)$ (where $t$ is also arc-length parameter). We claim that $\bar\gamma(t)|_{[a,b]}$ is just the desired path.

Note that the choice of $\{a_0, a_1, \cdots, a_m\}$ implies that, for each $1\leqslant i\leqslant m-1$,
the function $|ya_{i-1}|^2+|ya_{i+1}|^2$ with $y\in F$ attains a minimum at $a_i$.
Then by the extremal property of $F$, for any $[a_ip]$ with $p\in X$ and $p\neq a_i$, we can conclude that
\begin{equation}\label{eqn5.4}
|a_{i-1}a_i|\cos |\Uparrow_{a_i}^{a_{i-1}}\uparrow_{a_i}^{p}| +|a_ia_{i+1}|\cos |\Uparrow_{a_i}^{a_{i+1}}\uparrow_{a_i}^{p}|\geqslant 0,
\end{equation}
which implies that (by Theorem \ref{thm1.1}, note that we have assumed $k=0$)
\begin{equation}\label{eqn5.5}
\left(|pa_{i+1}|^2-|pa_{i}|^2\right)-\left(|pa_{i}|^2-|pa_{i-1}|^2\right)\leqslant |a_{i-1}a_i|^2+|a_{i}a_{i+1}|^2.
\end{equation}
(\ref{eqn5.4}) and (\ref{eqn5.5}) are equivalent to the corresponding (\ref{eqn5.1}) and (\ref{eqn5.2})
respectively if $|a_{i-1}a_i|=|a_{i}a_{i+1}|$ (note that $\frac{|a_ja_{j+1}|}{|a_ia_{i+1}|}\to 1$ as $m\to \infty$ for all $0\leqslant i,j\leqslant m-1$).
By putting the (\ref{eqn5.5}) for all $i$ together, and through
a really skillful calculation with a limiting argument, it can be concluded that
$\bar\gamma(t)|_{[a,b]}$ satisfies (\ref{eqn5.2}) for all $t\in (a,b)$ and $p\in X$ with $p\neq\gamma(t)$,
i.e. $\bar\gamma(t)|_{[a,b]}$ is a quasi-geodesic.

\vskip2mm

Let's move onto the case where $F$ is locally quasi-convex in $X$, and
show that a shortest path $\gamma(t)|_{[a,b]}$ in $F$ is a quasi-geodesic (i.e. (C2)).
In applying the above approach to our situation,
a main problem is that $\{a_0, a_1, \cdots, a_m\}$ might have no limit as $m\to\infty$ because $F$ might not be closed in $X$.
However, as mentioned above, we can divide $\gamma(t)|_{[a,b]}$
into sufficiently small pieces, and it suffices to prove that each piece is a quasi-geodesic. Since each piece
can be sufficiently small, one can assume that it lies in a `quasi-convex neighborhood' $U$ of some $\gamma(t)$ (see Definition A for the choice of $U$). Namely, (C2) can be reduced to the case where $F$ is a quasi-convex subset.
In such a situation, $\{a_0, a_1, \cdots, a_m\}$ converges to a shortest path in $F$ as $m\to\infty$.
Moreover, as mentioned in the beginning of this section,
we need only to show that (\ref{eqn5.4}) is still true in the premise of
`$|ya_{i-1}|^2+|ya_{i+1}|^2$ with $y\in F$ attains a minimum at $a_i$'.
This is guaranteed by Lemma \ref{lem5.1} below (see (5.1.2) and Remark \ref{rem5.2}).

\begin{lemma}\label{lem5.1}
Let $F$ be a quasi-convex subset in an $X\in \text{\rm Alex}(k)$, and let $q_1,q_2\in X$.
Suppose that $f(|yq_1|)+f(|yq_2|)$ with $y\in F$ attains a local minimum at $p\in F$,
where the function $f(t)$ satisfies $f'(t)|_{t=|yq_i|}\geqslant 0$ ($i=1,2$).
Then the following hold:

\vskip1mm

\noindent{\rm (5.1.1)} If $\Sigma_pF\neq\emptyset$, then for any $[pq_1]$ and $[pq_2]$ and for all $\xi \in \Sigma_pF$ we have that
\begin{equation}\label{eqn5.6}
f'(|pq_1|)\cos |\uparrow_{p}^{q_1}\xi| +f'(|pq_2|)\cos |\uparrow_{p}^{q_2}\xi|=0,
\end{equation}
which implies that if $f'(|pq_i|)>0$, then
\begin{equation}\label{eqn5.6.0}
\text{$|\uparrow_{p}^{q_i}\xi|=|\Uparrow_{p}^{q_i}\xi|$}.
\end{equation}

\vskip1mm

\noindent{\rm (5.1.2)} If both $q_1$ and $q_2$ lie in $F$, then for any $[px]$ with $x\in X$ and $p\neq x$ we have that
	\begin{equation}\label{eqn5.7}
	f'(|pq_1|)\cos |\Uparrow_{p}^{q_1}\uparrow_{p}^{x}| +f'(|pq_2|)\cos |\Uparrow_{p}^{q_2}\uparrow_{p}^{x}|\geqslant 0.
	\end{equation}
\end{lemma}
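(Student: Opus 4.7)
The plan is to deduce both~(5.1.1) and~(5.1.2) from the first-variation formula, Lemma~\ref{lem4.4} and Corollary~\ref{cor3.5}, systematically exploiting the fact, from~(C1), that $\Sigma_pF$ is quasi-convex in $\Sigma_pX\in\text{\rm Alex}(1)$. For~(5.1.1) I would first run the standard first-variation step: writing $a_j=f'(|pq_j|)$ and picking, for each $\xi\in\Sigma_pF$, points $p_i\in F$ with $p_i\to p$ and $\uparrow_p^{p_i}\to\xi$, Lemma~\ref{lem3.6} gives $|q_jp_i|=|q_jp|-|pp_i|\cos|\Uparrow_p^{q_j}\xi|+o(|pp_i|)$. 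Substituting into the local-minimum inequality $f(|q_1p_i|)+f(|q_2p_i|)\geqslant f(|q_1p|)+f(|q_2p|)$ and using $a_j\geqslant 0$ yields
\[
a_1\cos|\Uparrow_p^{q_1}\xi|+a_2\cos|\Uparrow_p^{q_2}\xi|\leqslant 0\qquad\forall\,\xi\in\Sigma_pF.
\]
I would then apply~(4.4.2) of Lemma~\ref{lem4.4} to $Y=\Sigma_pX$, $F_0=\Sigma_pF$ with $x_j\in\Uparrow_p^{q_j}$ an arbitrary representative; when $\Sigma_pF$ reduces to a single $\xi_0$, Proposition~\ref{pro4.1} forces $|\zeta\xi_0|\leqslant\frac{\pi}{2}$ for all $\zeta\in\Sigma_pX$ and Remark~\ref{rem4.6} takes over. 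Either way one obtains the equality~(\ref{eqn5.6}); (\ref{eqn5.6.0}) then drops out by comparing~(\ref{eqn5.6}) for two different choices of $[pq_j]$, since this forces $|\uparrow_p^{q_j}\xi|$ to be independent of the choice whenever $a_j>0$ and hence to coincide with $|\Uparrow_p^{q_j}\xi|$.

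For~(5.1.2) I would fix $\eta'=\uparrow_p^x\in\Sigma_pX$ with $x\neq p$. If $\eta'\in\Sigma_pF$, then~(\ref{eqn5.6}) applied to $\xi=\eta'$ already gives the asserted nonnegativity (with equality). Otherwise I would choose $\xi\in\Sigma_pF$ realizing $|\eta'\xi|=|\eta'\Sigma_pF|$: the implication~(\ref{eqn5.6.0}) makes $|\bar\zeta\xi|$ constant in $\bar\zeta\in\Uparrow_p^{q_j}$ whenever $a_j>0$, which is precisely the hypothesis of Corollary~\ref{cor3.5}; applying it with $r=q_j$ produces $\zeta_j\in\Uparrow_p^{q_j}$ with $\tilde\angle_1\eta'\xi\zeta_j\leqslant\frac{\pi}{2}$. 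I would then feed $x_j=\zeta_j$ and $\eta=\eta'$ into~(4.4.3) of Lemma~\ref{lem4.4} (or Remark~\ref{rem4.6} in the one-point case); the right-hand side there equals $\sum_j a_j\sin|\zeta_j\xi|\cos\tilde\angle_1\zeta_j\xi\eta'\geqslant 0$ by the construction of the $\zeta_j$, so $a_1\cos|\zeta_1\eta'|+a_2\cos|\zeta_2\eta'|\geqslant 0$, and the obvious bound $\cos|\Uparrow_p^{q_j}\eta'|\geqslant\cos|\zeta_j\eta'|$ together with $a_j\geqslant 0$ upgrades this to~(\ref{eqn5.7}).

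I expect the main hurdle to be not any single calculation but rather the bookkeeping between the direction-set $\Uparrow_p^{q_j}$ and a single direction $\uparrow_p^{q_j}$: the implication~(\ref{eqn5.6})$\Rightarrow$(\ref{eqn5.6.0}) is exactly what licenses the invocation of Corollary~\ref{cor3.5} in the second part, so without the rigidity that Lemma~\ref{lem4.4} extracts from~(C1) the bridge from~(5.1.1) to~(5.1.2) would not close. Secondary nuisances---one of the $a_j$ vanishing, or $\Sigma_pF$ consisting of a single point---are exactly what Proposition~\ref{pro4.1} and Remark~\ref{rem4.6} are designed to absorb, so I would simply cite them at the relevant branches of the argument rather than carry them in parallel.
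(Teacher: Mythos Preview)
Your approach is essentially identical to the paper's: the same first-variation step to reach~(\ref{eqn5.8}), the same appeal to~(C1) plus Lemma~\ref{lem4.4}(4.4.2) (with Proposition~\ref{pro4.1} and Remark~\ref{rem4.6} covering the one-point case) to obtain~(\ref{eqn5.6}) and~(\ref{eqn5.6.0}), and then the same combination of Corollary~\ref{cor3.5} with~(4.4.3) to produce~(\ref{eqn5.7}). The final upgrade from $\zeta_j$ to $\Uparrow_p^{q_j}$ via $a_j\geqslant 0$ is exactly what the paper does implicitly.

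There is one small gap: in~(5.1.2) you tacitly assume $\Sigma_pF\neq\emptyset$ when you write ``choose $\xi\in\Sigma_pF$ realizing $|\eta'\xi|=|\eta'\Sigma_pF|$''. The case where $p$ is an isolated point of $F$ (so $\Sigma_pF=\emptyset$) is genuine, since $q_1,q_2\in F$ need not be near $p$. The paper dispatches it in one line: $\dist_x|_F$ then attains a local minimum at $p$, so Proposition~\ref{pro1.3} gives $|\Uparrow_p^{q_i}\uparrow_p^x|\leqslant\frac{\pi}{2}$ directly, and~(\ref{eqn5.7}) follows from $a_j\geqslant 0$. You should add this branch to your list of ``secondary nuisances''.
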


\begin{remark}\label{rem5.2} { \rm  When applying (5.1.2) to see (\ref{eqn5.4}), we just need
to set $f(t)=t^2$ in Lemma \ref{lem5.1} (in fact, the case where $f'(|pq_i|)=0$
is not involved).  As mentioned above, we just only give a proof
of (C2) for the case where $k=0$. For other cases, we should set $f(t)=1-\cos(\sqrt{k}t)$
or $\cosh(\sqrt{-k}t)-1$ for $k>0$ or $k<0$ respectively in order to
see the corresponding (\ref{eqn5.4}).}
\end{remark}

\begin{proof}[Proof of Lemma \ref{lem5.1}] \

(5.1.1)\ \ Since $p$ is a local minimum point of $f(y)|_{y\in F}$ and
$\Sigma_pF\neq\emptyset$, by Lemma \ref{lem3.6} we have that
$$
f'(|pq_1|)\cos |\Uparrow_{p}^{q_1}\xi| +f'(|pq_2|)\cos |\Uparrow_{p}^{q_2}\xi|\leqslant 0 \text{ for all } \xi \in \Sigma_pF;
$$
and thus, for any $[pq_1]$ and $[pq_2]$,
\begin{equation}\label{eqn5.8}
f'(|pq_1|)\cos |\uparrow_{p}^{q_1}\xi| + f'(|pq_2|)\cos |\uparrow_{p}^{q_2}\xi|\leqslant 0.
\end{equation}
Note that $\Sigma_pF$ is quasi-convex in $\Sigma_pX$ (by (C1)),
and that $|\eta\xi|\leqslant\frac\pi2$ for all $\eta\in\Sigma_pX$ if $\Sigma_pF=\{\xi\}$ (by Proposition \ref{pro4.1}).
Then (\ref{eqn5.8}) enables us to apply (4.4.2) and Remark \ref{rem4.6} to conclude (\ref{eqn5.6}).
Moreover, due to the arbitrariness of $[pq_i]$,  it follows that
$|\uparrow_{p}^{q_i}\xi|=|\Uparrow_{p}^{q_i}\xi|$ if $f'(|pq_i|)>0$.

\vskip1mm

(5.1.2)\ \ We first consider the case where $\Sigma_pF\neq\emptyset$.
For any $[px]$ with $p\neq x$, if $\uparrow_p^x\in\Sigma_pF$, then (\ref{eqn5.7}) follows from (\ref{eqn5.6}).
If $\uparrow_p^x\notin\Sigma_pF$, then there is $\xi\in \Sigma_pF$ such that $|\uparrow_p^x\xi|=|\uparrow_p^x \Sigma_pF|$.
By (4.4.3) and Remark \ref{rem4.6},
to see (\ref{eqn5.7}) it suffices to show that if $f'(|pq_i|)>0$,
then there is $[pq_i]$ such that
\begin{equation}\label{eqn5.9}
\tilde\angle_1 \uparrow_{p}^{q_i}\xi\uparrow_{p}^x\leqslant\frac\pi2.
\end{equation}
Here, a key is that $|\uparrow_{p}^{q_i}\xi|=|\Uparrow_{p}^{q_i}\xi|$ for all
$\uparrow_{p}^{q_i}\in \Uparrow_{p}^{q_i}$ if $f'(|pq_i|)>0$  (by (\ref{eqn5.6.0})).
This together with $q_i\in F$ enables us to apply Corollary \ref{cor3.5} to see (\ref{eqn5.9}).

As for the case where $\Sigma_pF=\emptyset$, i.e. $p$ is an isolated point of $F$, it is almost obvious that $|\Uparrow_{p}^{q_i}\uparrow_p^{x}|\leqslant\frac\pi2$ which implies (\ref{eqn5.7}).
In fact, $\dist_{x}|_F$ attains a local minimum at $p$ in this case, so by Proposition \ref{pro1.3} we have that $|\Uparrow_{p}^{q_i}\uparrow_p^{x}|\leqslant\frac\pi2$.
\end{proof}

\begin{remark}\label{rem5.3} { \rm
According to Remark \ref{rem4.7}, if $F$ is extremal, then (\ref{eqn5.8}) enables us to
conclude that $f'(|pq_1|)\cos |\uparrow_{p}^{q_1}\xi|+f'(|pq_2|)\cos |\uparrow_{p}^{q_2}\xi|\geqslant0$
for all $\xi \in \Sigma_pX$, which relies on the proof of (4.4.2).
Consequently, restricted to extremal case, (\ref{eqn5.7}) (i.e. (\ref{eqn5.4})) holds without involving (4.4.3) and Corollary \ref{cor3.5},  and our proof for it is different from that in \cite{PP1} (see Remark \ref{rem4.7}). }
\end{remark}


\noindent School of Mathematical Sciences (and Lab. math. Com.
Sys.), Beijing Normal University, Beijing, 100875
P.R.C.\\
e-mail: suxiaole$@$bnu.edu.cn; wyusheng$@$bnu.edu.cn

\vskip2mm

\noindent Mathematics Department, Capital Normal University,
Beijing, 100037 P.R.C.\\
e-mail: 5598@cnu.edu.cn

\end{document}